\let\@fnsymbol\@arabic  % Redefine \@fnsymbol to use numbers instead of symbols
\newtheorem{thm}{Theorem}[section]
\newtheorem{lm}[thm]{Lemma}
\newtheorem{res}[thm]{Result}
\newtheorem{crl}[thm]{Corollary}
\newtheorem{prop}[thm]{Proposition}
\theoremstyle{definition}
\newtheorem{rmk}[thm]{Remark}
\newtheorem{df}[thm]{Definition}
\newtheorem{nota}[thm]{Notation}
\newtheorem{ex}[thm]{Example}
\newcommand{\FF}{\mathbb F}
\newcommand{\QQ}{\mathbb Q}
\newcommand{\ZZ}{\mathbb Z}
\newcommand{\NN}{\mathbb N}
\newcommand{\vspan}[1]{\left \langle #1 \right \rangle}
\newcommand{\sett}[2]{ \left\{ #1 \, \, || \, \, #2 \right \} }
\newcommand{\one}{\mathbf 1}
\newcommand{\zero}{\mathbf 0}
\newcommand{\floor}[1]{\left \lfloor #1 \right \rfloor}
\newcommand{\ceil}[1]{\left \lceil #1 \right \rceil}
\newcommand{\mc}{\mathcal C}
\renewcommand{\mp}{\mathcal P}
\newcommand{\fqi}{\FF_q \cup \{\infty\}}
 \DeclareMathOperator{\supp}{supp}
 \DeclareMathOperator{\pr}{pr}
 \DeclareMathOperator{\wt}{wt}
 \DeclareMathOperator{\rk}{rk}
 \DeclareMathOperator{\PG}{PG}
    \newcommand{\pg}{\PG}
 \DeclareMathOperator{\AG}{AG}
    \newcommand{\ag}{\AG}
 \DeclareMathOperator{\PGL}{PGL}
 \DeclareMathOperator{\Sym}{Sym}
 \DeclareMathOperator{\sgn}{sgn}
\title{Multisets with few special directions and small weight codewords in Desarguesian planes}
\author{
 Sam Adriaensen
  \thanks{Department of Mathematics and Data Science, Vrije Universiteit Brussel, Pleinlaan 2, 1050 Elsene, Belgium. \href{mailto:sam.adriaensen@vub.be}{sam.adriaensen@vub.be}} 
  \thanks{Department of Mathematical Sciences, Worcester Polytechnic Institute, 100 Institute Road, 01609 Worcester, MA, US.}
 \and Tam\'as Sz\H onyi
  \thanks{HUN-REN-ELTE Geometric and Algebraic Combinatorics Research Group, P\'azm\'any P\'eter s\'et\'any 1/C, H-1117 Budapest, Hungary.
   \href{mailto:tamas.szonyi@ttk.elte.hu}{tamas.szonyi@ttk.elte.hu},
   \href{mailto:zsuzsa.weiner@gmail.com}{zsuzsa.weiner@gmail.com}} 
  \thanks{Institute of Mathematics, ELTE E\"otv\"os Lor\'and University, P\'azm\'any P\'eter s\'et\'any 1/C, H-1117 Budapest, Hungary,}
  \thanks{University of Primorska FAMNIT, Glagolja\v ska ulica 8, 6000 Koper, Slovenia.}
 \and Zsuzsa Weiner
  \footnotemark[3]
}
\date{}
\begin{document}

\maketitle

\begin{abstract}
 In this paper, we tie together two well studied topics related to finite Desarguesian affine and projective planes.
 The first topic concerns directions determined by a set, or even a multiset, of points in an affine plane.
 The second topic concerns the linear code generated by the incidence matrix of a projective plane.
 We show how a multiset determining only $k$ special directions, in a modular sense, gives rise to a codeword whose support can be covered by $k$ concurrent lines.
 The reverse operation of going from a codeword to a multiset of points is trickier, but we describe a possible strategy and show some fruitful applications.

 Given a multiset of affine points, we use a bound on the degree of its so-called projection function to yield lower bounds on the number of special directions, both in an ordinary and in a modular sense.
 
 In the codes related to projective planes of prime order $p$, there exists an odd codeword, whose support is covered by 3 concurrent lines, but which is not a linear combination of these 3 lines.
 We generalise this codeword to codewords whose support is contained in an arbitrary number of concurrent lines.
 In case $p$ is large enough, this allows us to extend the classification of codewords from weight at most $4p-22$ to weight at most $5p-36$.
\end{abstract}

\paragraph{Keywords.} Finite geometry, Directions, Coding theory.

\paragraph{MSC.} 05B25, %Combinatorial aspects of finite geometry
94B05. %Linear codes (general theory)

\section{Introduction}

The goal of this article is to build a connection between two well-studied topics in finite geometry, namely (multi)sets determining few directions and small weight codewords of linear codes related to finite projective planes.
In addition to showing how the topics are connected, we will contribute to the study of both of them.
Despite both topics being two sides of the same coin, we have opted for an independent treatment where possible.
This should allow readers who are only interested in one of the topics to ignore the parts of the paper dealing with the other.

\bigskip

Throughout this article, $p$ will denote a prime number and $q=p^h$ a prime power.
Some more finite geometry notation is introduced in \Cref{Sec:Preliminaries}.
This should be standard to finite geometers, but the less initiated reader might want to consult the preliminaries before reading the introduction.

\subsection{Directions}

A set $S$ of points in $\ag(2,q)$ is said to \emph{determine a direction} $(d) \in \FF_q \cup \{\infty\}$ if there are two points in $S$ which are joined by a line with slope $d$.
There has been intense investigation into sets that determine few directions, see e.g.\ the seminal work of R\'edei \cite{lacunary}, Blokhuis, Ball, Brouwer, Storme, and Sz\H onyi \cite{BBBSS}, and Ball \cite{Ball03}.
Most attention has been paid to sets $S$ of size $q$, since the pigeonhole principle implies that any set of size greater than $q$ determines every direction.
However, Ghidelli \cite{Ghidelli} recently gave a sensible definition for directions associated to larger sets, see also the work of Kiss and Somlai \cite{Kiss:Somlai}.

\begin{df}
 \label{Df:SpecialDir}
 Given a set $S$ in $\ag(2,q)$, we say that $S$ is \emph{equidistributed} from direction $(d) \in \FF_q \cup \{\infty\}$ if every line with slope $d$ contains $\floor{|S|/q}$ or $\ceil{|S|/q}$ points of $S$.
 A direction from which $S$ is not equidistributed is called a \emph{special direction}.
\end{df}

Note that for a set $S$ whose size is at most $q$, a direction is determined if and only if it is special.
Ghidelli \cite{Ghidelli}, focusing on the case where $q$ is prime, proved the following result, generalising a result of R\'edei \cite{lacunary} and the second author \cite{Szonyi96, Szonyi99}.

\begin{res}[{\cite[Theorem 1.3]{Ghidelli}}]
 \label{Res:Ghidelli}
 Let $p$ be prime and $S$ a set of $np-r$ points in $\ag(2,p)$, with $1 \leq n \leq p$ and $0 \leq r < p$.
 Then $S$ is either contained in the union of $n$ lines or $S$ has at least $\ceil{\frac{p+n+2-r}{n+1}}$ special directions.
\end{res}

Ghidelli \cite[Problem 1.4]{Ghidelli} asked whether the above bound can be improved to $\ceil{\frac{p+3-r}2}$.
Kiss and Somlai \cite{Kiss:Somlai} investigated special directions determined by a set $S$ of points in $\ag(2,p)$, $p$ prime, in the case that $p$ divides $|S|$.
Note that in this case, a direction $(d)$ is equidistributed if and only if each line with slope $d$ contains exactly $|S|/p$ points of $S$.
They answered Ghidelli's question in the negative.
Given a prime number $p$, we can interpret the elements of $\FF_p$ as the integers from $0$ to $p-1$.
This defines in a natural way an ordering $<$ on the elements of $\FF_p$.

\begin{res}[{\cite[Theorem 1.1]{Kiss:Somlai}}]
 \label{Res:KissSomlai}
 Let $p > 2$ be prime.
 Consider the set of $\frac{p-1}2 p$ points
 \[
  S = \sett{ (x,y) \in \FF_p^2 }{ y < x }
 \]
 in $\ag(2,p)$.
 This set has exactly 3 special directions.
 Moreover, every set of $\ag(2,p)$ having exactly 3 special directions is equivalent up to affine transformation to either $S$ or its complement.
\end{res}

The above construction of the set $S$ is quite intriguing.
We can interpret the set $S$ as the set of points ``below'' the line $Y = X$.
This begs the question whether the construction can be generalised to yield sets determining e.g.\ $4$ special directions.
A first instinct might be to look at the set of points below a parabola in $\ag(2,p)$.
While such a set exhibits some interesting behaviour, it does not have any equidistributed directions, save perhaps when $p$ is very small.
This is addressed in a separate note \cite{Parabola}.
Then how does one generalise the construction of the set $S$ from \Cref{Res:KissSomlai}?
We will see later how $S$ arises from an \emph{odd codeword} of a linear code associated to the projective plane $\pg(2,p)$.
This allows us to give an alternative proof for the fact that $S$ has 3 special directions, see \Cref{Thm:KissSomlaiAgain}.
This link can be generalised, but we need a less restrictive notion than sets of points and their special directions.

As a first step, we generalise the framework of Kiss and Somlai \cite{Kiss:Somlai} from sets of points in $\ag(2,p)$ to multisets of points in $\ag(2,q)$.
Whenever we refer to the size of a multiset, sum over the elements of a multiset, etc., we always take into account the multiplicities, see \Cref{Sec:Preliminaries} for more details.
For the reader's convenience, we will denote multisets of points by $M$;
when we are speaking about a set, not a multiset, we will denote it as $S$.

\Cref{Df:SpecialDir} naturally extends to multisets of points in $\ag(2,q)$, and we will use the same terminology.
We emphasise that when we say that a line contains $k$ points of a multiset, we do take into account the multiplicities.
In addition, we will also use the following more relaxed definition.

\begin{df}
 Let $M$ be a multiset in $\ag(2,q)$.
 We say that the direction $(d) \in \fqi$ is \emph{modularly equidistributed}, or simply \emph{mod-equidistributed}, if every line with slope $d$ contains the same number of points of $M$ modulo $p$.
 If $(d)$ is not modularly equidistributed, we call it \emph{modularly special} or \emph{mod-special}.
\end{df}

We would like to point out that if $(d_1)$ and $(d_2)$ are mod-equidistributed directions of $M$, and every line with slope $d_i$ contains $r_i$ modulo $p$ points, this need not imply that $r_1 = r_2$.

\begin{df}
 \label{Df:Pr}
Consider a multiset $M$ of points in $\ag(2,q)$, with $q=p^h$ and $p$ prime.
For each direction $(d) \in \FF_q \cup \{\infty\}$, define the \emph{projection function} $\pr_{M,d}$ of $M$ from $(d)$ as the function $\FF_q \to \FF_p$, which maps $b \in \FF_q$ to the number of points of $M$ on the line $Y = dX+b$, or the line $X+b=0$ if $d = \infty$, reduced modulo $p$.
This function coincides with a unique polynomial in $\FF_q[X]$ of degree at most $q-1$, and we will interpret $\pr_{M,d}$ as this polynomial.
\end{df}

The projection function was used originally by Kiss and Somlai in the special case where $q$ is prime and $M$ is an ordinary set.
They proved the following proposition.

\begin{res}[{\cite[Proposition 3.1]{Kiss:Somlai}}]
 Let $S$ be a set of points in $\ag(2,p)$, with $p$ prime, determining $k \geq 2$ special directions.
 Then for every direction $(d)$, $\deg \pr_{S,d} \leq k-2$.
\end{res}

The original proof uses the so-called \emph{R\'edei polynomial}, which encodes the size of $S \cap \ell$ for a line $\ell$ as the multiplicity of certain roots.
We will prove that this proposition extends to our more general setting, by encoding intersection sizes in an additive instead of multiplicative way.
Although the use of the R\'edei polynomial extends to our setting, the additive approach avoids the use of more advanced results such as the Newton identities, and highlights the connection with coding theory.

\begin{thm}
 \label{Thm:Pr}
Let $M$ be a multiset of points in $\ag(2,q)$ determining $k \geq 2$ mod-special directions.
 Then for every direction $(d)$, $\deg \pr_{M,d} \leq k-2$.
\end{thm}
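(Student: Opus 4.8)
The plan is to translate the degree condition on $\pr_{M,d}$ into the vanishing of a family of power sums, and then to recognise these power sums, as the direction varies, as the values of a single binary form on $\pg(1,q)$. First I would dispose of the trivial case: if $k = q+1$ then every direction is mod-special and the bound $\deg \pr_{M,d} \le q-1$ holds automatically, so I may assume $k \le q$, i.e.\ that at least one direction is mod-equidistributed.

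Next I would set up the additive encoding. For a finite direction $(d)$ and a point $(a,b)$, the line of slope $d$ through $(a,b)$ meets the axis $X=0$ at height $b - da$, so I put $\sigma_m(d) := \sum_{(a,b)} M(a,b)(b-da)^m \in \FF_q$ for each $m \ge 0$ (with the analogous definition $\sigma_m(\infty) := \sum_{(a,b)} M(a,b)(-a)^m$). Writing the indicator that the line of slope $d$ with intercept $C$ passes through $(a,b)$ as $1 - (C-(b-da))^{q-1}$, I obtain the interpolation formula $\pr_{M,d}(C) = \sum_{(a,b)} M(a,b)\bigl(1 - (C - (b-da))^{q-1}\bigr)$, and reading off the coefficient of $C^{\,j}$ (for $j \ge 1$) shows it is a nonzero scalar multiple of $\sigma_{q-1-j}(d)$, the scalar being $\pm\binom{q-1}{j}$, which is nonzero modulo $p$ by Lucas' theorem. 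Hence $\deg \pr_{M,d} \le k-2$ is \emph{equivalent} to $\sigma_m(d) = 0$ for all $0 \le m \le q-k$. This is the sense in which intersection numbers are encoded additively, via the power sums $\sigma_m$, rather than multiplicatively via the R\'edei polynomial.

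The crux is then to show $\sigma_m(d) = 0$ for \emph{every} direction $(d)$ once $0 \le m \le q-k$. I would assemble the $\sigma_m$ into the binary form $\Phi_m(D,E) := \sum_{(a,b)} M(a,b)(bE - aD)^m$, homogeneous of degree $m$, whose value at the chosen representative of the point of $\pg(1,q)$ representing $(d)$ is exactly $\sigma_m(d)$ (namely $(d:1)$ for a finite slope and $(1:0)$ for $\infty$); in particular its projective zeros are precisely the directions at which $\sigma_m$ vanishes. At any mod-equidistributed direction $(e)$, with common intersection number $r$ modulo $p$, one computes $\sigma_m(e) = r \sum_{b \in \FF_q} b^m$, and $\sum_{b \in \FF_q} b^m = 0$ for every $m$ with $0 \le m \le q-2$; since $q - k \le q - 2$, the form $\Phi_m$ vanishes at all $q+1-k$ mod-equidistributed directions. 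A nonzero binary form of degree $m$ has at most $m$ projective roots, but $m \le q-k < q+1-k$, so $\Phi_m$ has strictly more roots than its degree and must be the zero form. Therefore $\sigma_m(d) = 0$ at every direction $(d)$ for all $0 \le m \le q-k$, and the equivalence from the previous paragraph yields $\deg \pr_{M,d} \le k-2$ for every $(d)$.

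The main obstacle is getting the first step exactly right: one must verify that the passage from $\deg \pr_{M,d}$ to the power sums $\sigma_m(d)$ is a genuine equivalence, which hinges on the non-vanishing of $\binom{q-1}{j}$ modulo $p$ for all $j$. Once this additive dictionary is in place, the remaining work is the pleasant observation that the $q+1$ power sums $\sigma_m(\cdot)$ glue into one degree-$m$ binary form; this simultaneously handles the affine directions and the direction $\infty$, and converts the hypothesis of having only $k$ mod-special directions directly into a root count on $\pg(1,q)$.
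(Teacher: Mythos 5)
Your proof is correct and follows essentially the same route as the paper's: both expand the additive encoding $\sum_{(a,b)\in M}\bigl(1-(C-(b-da))^{q-1}\bigr)$ in powers of the intercept, observe that the coefficient of $C^{\,j}$ is (up to a correction at equidistributed directions) a polynomial of degree at most $q-1-j$ in the direction variable vanishing at all $q+1-k$ mod-equidistributed directions, and kill it by a root count. The differences are cosmetic: your homogeneous forms $\Phi_m$ handle the direction $(\infty)$ without the paper's affine-transformation reduction, and your explicit power sums $\sigma_m$ together with the Lucas-theorem check that $\binom{q-1}{j}\not\equiv 0 \pmod p$ replace the paper's device of subtracting $\sum_{e} r_e\bigl(1-(D-eT)^{q-1}\bigr)$ to force the vanishing at equidistributed directions.
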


The degree bound has some interesting consequences.

\begin{thm}
 \label{Thm:MinModSpecialDir}
 Let $M$ be a multiset of points in $\ag(2,q)$, with $q=p^h$ and $p$ prime.
 Then $M$ has either 0 or at least $\frac qp + 2$ mod-special directions.
\end{thm}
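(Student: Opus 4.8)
The plan is to split on whether $p$ divides $|M|$, to translate mod-equidistribution into the vanishing of a family of power-sum polynomials, and then to combine the degree bound of \Cref{Thm:Pr} with a lacunary-polynomial (Frobenius) argument that produces the subfield term $\frac qp$. First I would record the explicit shape of the projection polynomial. Since the indicator of a point $(x_0,y_0)$ lying on $Y=dX+b$ equals $1-(b+dx_0-y_0)^{q-1}$ over $\FF_q$, one gets $\pr_{M,d}(b)=|M|-\sum_{(x_0,y_0)\in M}(b+dx_0-y_0)^{q-1}$, whose coefficient of $b^{q-1}$ equals $-|M|$ modulo $p$ (for both finite $d$ and $d=\infty$). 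Hence, if $p\nmid|M|$, every $\pr_{M,d}$ has degree exactly $q-1$, so no direction is mod-equidistributed and all $q+1\ge\frac qp+2$ directions are mod-special. This settles the case $p\nmid|M|$, and I may assume $p\mid|M|$ from now on.

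Expanding and using $\binom{q-1}{j}\equiv(-1)^{j}\pmod p$, I would write $\pr_{M,d}(b)=\sum_{j=1}^{q-1}(-1)^{j+1}P_j(d)\,b^{q-1-j}$, where $P_j(d)=\sum_{(x_0,y_0)\in M}(dx_0-y_0)^{j}$ is a polynomial in $d$ of degree at most $j$ (the $b^{q-1}$-term disappeared because $p\mid|M|$). Thus a finite direction $d$ is mod-equidistributed exactly when $P_1(d)=\dots=P_{q-2}(d)=0$, and otherwise $\deg\pr_{M,d}=q-1-j_{\min}(d)$ with $j_{\min}(d)=\min\{j\ge1:P_j(d)\ne0\}$. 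Now let $k\ge1$ be the number of mod-special directions. A root count settles the small cases: each $P_j$ with $j\le q-2$ has degree $<q-1$, so if $q-1$ or more directions were equidistributed then every such $P_j$ would vanish identically, forcing all finite directions, and then $d=\infty$, to be equidistributed, i.e. $k=0$; this rules out $k=1$, so $k\ge2$ and I may invoke \Cref{Thm:Pr}. Applying it to each special finite direction gives $q-1-j_{\min}(d)\le k-2$, i.e. $j_{\min}(d)\ge q+1-k$; together with $P_j(d)=0$ at every equidistributed direction this yields $P_j(d)=0$ for every finite $d$ and every $1\le j\le q-k$. As $\deg P_j\le j<q$, this means $P_1\equiv\dots\equiv P_{q-k}\equiv0$ identically, so the least index $j_0$ with $P_{j_0}\not\equiv0$ satisfies $j_0\ge q-k+1$; since a special finite direction forces $j_0\le q-2$, one already obtains $k\ge3$.

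The main obstacle is the quantitative jump from $k\ge3$ to $k\ge\frac qp+2$, which is exactly where the subfield term must appear. The mod-equidistributed finite directions are common roots of all the $P_j$, hence roots of $P_{j_0}$, so it suffices to bound the number of distinct roots of $P_{j_0}$ in $\FF_q$; concretely, I want to show that the total number of mod-equidistributed directions is at most $q-1-\frac qp$, which, compared with the $q+1-k$ equidistributed directions, gives $k\ge\frac qp+2$ (and recovers the value $3$ of \Cref{Res:KissSomlai} when $q=p$). The naive estimate $\deg P_{j_0}\le j_0$ is far too weak. The point I expect to be genuinely delicate is that the vanishing of the long initial segment $P_1\equiv\dots\equiv P_{q-k}\equiv0$ imposes a $p$-power structure on the surviving polynomial, in the spirit of the lacunary-polynomial theorems of Sz\H onyi and of Blokhuis--Ball--Brouwer--Storme--Sz\H onyi, where full reducibility over $\FF_q$ of a lacunary polynomial caps its number of roots by a quantity of order $q-\frac qp$. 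Feeding such a lemma in — using the Frobenius relation $\sum_M x_0^{pa}y_0^{pb}=\bigl(\sum_M x_0^{a}y_0^{b}\bigr)^{p}$ among the moments to locate the relevant $p$-power, and keeping separate track of the direction $\infty$ to pin down the exact constant $+2$ — is the step that converts the degree information of \Cref{Thm:Pr} into the stated bound, and it is the part where I expect the real work to lie.
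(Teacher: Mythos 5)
Your argument is correct and complete up to the point where you conclude $k\ge 3$, but the final and decisive step --- upgrading $k\ge 3$ to $k\ge \frac qp+2$ --- is not actually carried out: you only describe a plan involving an unspecified lacunary-polynomial lemma and the Frobenius relations among the moments of $M$, and you yourself flag it as ``the part where the real work lies.'' As written, that plan has a concrete obstruction: the relation $P_{pj}=P_j^{\,p}$ only controls the power sums whose index is divisible by $p$, so knowing $P_j\equiv 0$ for all $j\le q-k$ does not propagate to the indices $j\in[q-k+1,\,q-2]$ that are prime to $p$; moreover $P_{j_0}$ is not of the shape $X^q+g(X)$ to which the Sz\H{o}nyi/Blokhuis--Ball--Brouwer--Storme--Sz\H{o}nyi fully-reducible lacunary results apply. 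So the entire quantitative content of the theorem (the term $\frac qp$) is missing from the proposal.

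The gap closes with a one-line observation that makes the $P_j$ machinery, and the case split on $p\mid|M|$, unnecessary --- and it is exactly what the paper does. If $(d)$ is mod-special, then $g=\pr_{M,d}$ is a non-constant polynomial of degree at most $q-1$ taking values only in $\FF_p$. Hence $g^p-g=\prod_{a\in\FF_p}(g-a)$ vanishes at every point of $\FF_q$; it is not the zero polynomial, since $g$ is not constant, and its degree is $p\deg g$. A nonzero polynomial with $q$ roots has degree at least $q$, so $\deg\pr_{M,d}\ge q/p$. Combined with \Cref{Thm:Pr}, which you already invoke, $q/p\le\deg\pr_{M,d}\le k-2$ gives the bound at once; the only remaining point is to exclude $k=1$, which both you and the paper handle by the same counting argument. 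Your treatment of the case $p\nmid|M|$ and of $k\ne 1$ is fine and can be kept, but the lacunary-polynomial plan should be replaced by this degree bound.
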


\begin{thm}
 \label{Thm:MinSpecialDir}
 Let $M$ be a multiset of $nq$ points in $\ag(2,q)$, with $q=p^h$, $p$ prime, and $h > 1$.
 If $M$ is not the union of $n$ (not necessarily distinct) lines, then $M$ determines at least $\frac pn + 1$ special directions.
\end{thm}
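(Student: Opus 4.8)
The plan is to compare ordinary special directions with the modularly special directions governed by \Cref{Thm:MinModSpecialDir}, and to treat separately the degenerate case where the two notions diverge.

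First I would record the key numerical coincidence: since $|M| = nq$ we have $\floor{|M|/q} = \ceil{|M|/q} = n$, so a direction $(d)$ is equidistributed precisely when \emph{every} line with slope $d$ meets $M$ in exactly $n$ points. In particular, if $(d)$ is equidistributed then all of these lines carry the same residue $n \bmod p$, so $(d)$ is mod-equidistributed. Contrapositively, every mod-special direction is special, and hence the number of special directions is at least the number of mod-special directions.

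Next, suppose $M$ has at least one mod-special direction. Then \Cref{Thm:MinModSpecialDir} gives at least $\frac qp + 2 = p^{h-1}+2$ mod-special, and therefore at least that many special, directions. Because $h > 1$ we have $p^{h-1} \geq p$, so this count is at least $p+2$, which exceeds $\frac pn + 1$ for every $n \geq 1$ (indeed $p+2-(\frac pn+1) = p(1-\tfrac1n)+1 > 0$). This settles the theorem whenever a mod-special direction exists, and it is exactly here that the hypothesis $h > 1$ enters.

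The remaining, and genuinely harder, case is when $M$ has \emph{no} mod-special direction, i.e.\ every projection function $\pr_{M,d}$ is constant; I expect this to be the main obstacle. Here one cannot appeal to \Cref{Thm:MinModSpecialDir}, yet $M$ may still fail to be a union of $n$ lines (for instance, when $n \geq p$ one may take $n$ copies of a line and move $p$ units of multiplicity from one of its points to a point off the line; this has $0$ mod-special directions but is not a union of $n$ lines). The strategy I would pursue is a rigidity-plus-descent argument. The rigidity input is that the point–line incidence matrix of $\ag(2,q)$ has trivial kernel over $\QQ$, so the only multiset all of whose lines meet it in exactly $n$ points is the constant multiset $\tfrac nq$ on the whole plane; thus a multiset with \emph{all} directions equidistributed is automatically a union of $n$ lines, and any non-example must have a special direction. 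To upgrade a single special direction to at least $\frac pn + 1$ of them, I would exploit that, in this case, the deviations $|M \cap \ell| - n$ are divisible by $p$ within each direction whose residue equals $n$, normalise these deviations by $p$, and feed the resulting ``second-order'' line-size data back into \Cref{Thm:Pr} (or directly into the mechanism behind \Cref{Thm:MinModSpecialDir}) to force many special directions. Making this descent well defined --- realising the normalised deviations as the projections of an honest multiset, and controlling the directions where the residue differs from $n$ --- is the crux, and is where I would expect the subfield structure coming from $h > 1$ to be used.
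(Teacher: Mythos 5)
Your reduction to the modular setting is sound and matches the paper's first step: since $|M|=nq$, equidistributed implies mod-equidistributed, so mod-special implies special, and if $M$ has any mod-special direction at all then \Cref{Thm:MinModSpecialDir} already gives at least $\frac qp+2\geq p+2>\frac pn+1$ special directions. The paper runs exactly this argument (contrapositively: at most $\frac pn$ special directions forces every direction to be mod-equidistributed).

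The gap is the case you yourself flag as ``the crux'': $M$ has no mod-special direction, yet some special directions, and you must show there are more than $\frac pn$ of them. Your proposed descent --- dividing the deviations $|M\cap\ell|-n$ by $p$ and feeding them back into \Cref{Thm:Pr} --- is not carried out and faces real obstructions: the normalised deviations can be negative and need not be the line-intersection numbers of any multiset, and the residue $r_d$ of a mod-equidistributed direction need not equal $n\bmod p$ (e.g.\ for $n$ copies of a single line the special direction has residue $0$), so ``controlling the directions where the residue differs from $n$'' is not a side issue but the whole problem. The paper resolves this case by a different and more elementary mechanism. Assuming at most $\frac pn$ special directions, pick a special direction $(d)$ with common residue $r$, and for each line $\ell_b$ of slope $d$ subtract from $M$ the line $\ell_b$ taken with multiplicity $m_b$, the minimum multiplicity of a point of $\ell_b$. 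This does not change which directions other than $(d)$ are special, keeps every line of slope $d$ at residue $r$, and forces every line of slope $d$ to pass through a point of multiplicity $0$. A counting lemma (\Cref{less than p points}: with at most $\frac pn$ special directions, a special-slope line through a point outside the multiset carries fewer than $p$ points) then pins each such line to exactly $r$ points, so $(d)$ becomes equidistributed for the reduced multiset. Iterating over the special directions exhibits $M$ as a union of $n$ lines, contradicting the hypothesis. You would need either to supply this reduction (or an equivalent) or to make your descent rigorous; as written, the second half of your argument is a plan rather than a proof.
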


We point out that the bound from \Cref{Thm:MinSpecialDir} is tight in case $h=2$ and $n=1$, as evidenced by taking $M$ to be an affine Baer subplane. 

Our result has some implications on \emph{multiple blocking sets of R\'edei type} as well.
We refer the interested reader to \Cref{Crl:Redei}.

\bigskip

Let us return to the question of how we can generalise \Cref{Res:KissSomlai}.
In \Cref{Sec:SetsToCode}, we will see how finding constructions of multisets in $\ag(2,q)$ with $k$ mod-special directions corresponds to finding explicit linear combinations of certain codewords in the code generated by the incidence matrix of $\pg(2,q)$.
Finding such explicit linear combinations is discussed in \Cref{Sec:LinComb}.
One consequence is that if $p$ is prime, then for each integer $k \in [3,p+1]$, there are multisets $M$ of points in $\ag(2,p)$ which determine exactly $k$ mod-special directions, but which are not unions of multisets with fewer than $k$ mod-special directions.
We refer to \Cref{Crl:MultisetNotUnion} for more details.

\subsection{Small weight codewords}

There are several codes related to subspaces of projective and affine spaces. 
Probably, the best known ones are the Reed-Muller codes.
The codes can be interpreted in both an algebraic way, related to multivariate polynomials over $\FF_2$, and in a geometric way, corresponding to subspaces in $\ag(n,2)$ or $\pg(n,2)$.
Delsarte, Goethals and MacWilliams \cite{delsartegoethalsmacwilliams} investigated the generalisations of both the algebraic and geometric description the Reed-Muller codes to larger finite fields.
They obtained important properties of the geometric generalisation by seeing them as subcodes of the algebraic generalisation.
The projective geometric generalisation is defined as follows.

\begin{df}
 Suppose that $p$ is prime and $q = p^h$.
 Let $\mp$ denote the set of points of $\pg(n,q)$.
 Given a $k$-space $\pi$ of $\pg(n,q)$, define its \emph{characteristic function} as
 \[
  \chi_{\pi}: \mp \to \FF_p: P \mapsto \begin{cases}
   1 & \text{if } P \in \pi, \\
   0 & \text{otherwise}.
  \end{cases}
 \]
 Let $\mc_k(n,q)$ denote the $\FF_p$-span of the characteristic functions of the $k$-spaces of $\pg(n,q)$.
 The codes $\mc_1(2,q)$ will simply be denoted as $\mc(2,q)$.
\end{df}

\begin{rmk}
 The code $\mc_k(n,q)$ is often defined in a different but equivalent way.
 It can be seen as the row space of the \emph{incidence matrix} of $k$-spaces and points of $\pg(n,q)$, when interpreted as a matrix over $\FF_p$.
 This perspective will be useful in \Cref{Sec:LinComb}.
\end{rmk}

The main questions are to determine the dimension and minimum distance of such codes.
The dimension of $\mc_k(n,q)$ was determined by Hamada \cite[Theorem 1]{hamada1968}.
Determining the minimum distance of a linear code is equivalent to determining its minimum weight.

\begin{df}
 Let $\mp$ be a set and $c:\mp \to \FF_q$ a function.
 The \emph{support} of $c$ is the set
 \[
  \supp(c) = \sett{P \in \mp}{c(P) \neq 0}
 \]
 and the \emph{weight} of $c$, denoted $\wt(c)$, is the size of $\supp(c)$.
\end{df}

The minimum weight of the codes $\mc_k(n,q)$ has been known for a long time.

\begin{res}[{\cite[\S 5.2.1]{delsartegoethalsmacwilliams}}]
 \label{Res:MinWt}
 The minimum weight of $\mc_k(n,q)$ is $\frac{q^{k+1}-1}{q-1}$, and the only codewords attaining this weight are the characteristic functions of the $k$-spaces and their scalar multiples.
\end{res}

More generally, a great deal of effort has been devoted to characterising codewords of $\mc_k(n,q)$ of relatively small weight.
We refer the reader to the excellent survey of
Lavrauw, Storme, and Van de Voorde \cite{Lavrauw:Storme:VandeVoorde:10}, and to the paper of the first author and Denaux \cite{Adriaensen:Denaux:24} for an overview of more recent results.

In the present paper, only the codes $\mc(2,q)$ and their small weight codewords are of interest to us.
A simple way to make a small weight codeword is to take a linear combination $c$ of the characteristic functions of a relatively small set of lines $\ell_1, \dots, \ell_n$.
In such a case, we will simply say that $c$ is a \emph{linear combination of the lines $\ell_1, \dots, \ell_n$}.
The last two authors of this paper proved the following result.

\begin{res}[{\cite[Theorem 4.3]{Szonyi:Weiner:14}}]
 \label{Res:SzWNonPrime}
 Suppose that $p$ is prime and $q = p^h \geq 32$ with $h \geq 2$.
 If $c \in \mc(2,q)$ and
 \[
  \wt(c) < \begin{cases}
   \frac{(p-1)(p-4)(p^2+1)}{2p-1} & \text{if } h = 2, \\
   \floor{\sqrt q + 1}\ceil{q - \sqrt q + 1} & \text{if } h > 2,
  \end{cases}
 \]
 then $c$ is a linear combination of $\ceil{\frac{\wt(c)}{q+1}}$ lines.
\end{res}

The above bound on $\wt(c)$ is known to be tight when $h > 2$ is even, as the characteristic functions of \emph{Hermitian unitals} are codewords of $\mc(2,q)$, see \cite{blokhuisbrouwerwilbrink}, which have weight $(\sqrt q + 1)(q - \sqrt q + 1)$, but cannot be written as the linear combination of only few lines.

However, the above result no longer holds in the case that $q=p$ is prime.

\begin{ex}
 \label{Ex:OddOn3}
 Let $p$ be prime and $\mp$ be the set of points of $\pg(2,p)$.
Consider the function
\[
 c: \mp \to \FF_p: P \mapsto \begin{cases}
  z & \text{if } P = (1,0,z) \text{ or } P = (0,1,z) \text{ or } P = (1,1,-z), \\
  0 & \text{otherwise}.
 \end{cases}
\]
Note that $\wt(c) = 3(p-1)$ and $\supp(c)$ is contained in the union of the 3 lines $X=0$, $Y=0$, and $X=Y$, which are concurrent at the point $(0,0,1)$.
It was proved by Bagchi \cite[Theorem 5.2]{Bagchi} and independently by De Boeck and Vandendriessche \cite[Example 10.3.4]{DeBoeckPhd} that $c$ is a codeword of $\mc(2,p)$.
Moreover, if $p \geq 5$, then $c$ is not a linear combination of the lines $X=0$, $Y=0$, and $X=Y$.
\end{ex}

\begin{df}
 \label{Df:OddCodeword}
 We call a codeword $c \in \mc(2,q)$ an \emph{odd codeword on $n$ lines} if the following conditions hold:
 \begin{enumerate}
  \item There exist $n$ lines $\ell_1, \dots, \ell_n$ concurrent at the point $P$, such that $\supp(c)$ is contained in the union of these lines.
  \item For none of the lines $\ell_i$, the codeword $c$ is constant on the points of $\ell_i \setminus \{P\}$.
 \end{enumerate}
\end{df}

Informally, the second condition assures that $c$ is not the sum of an odd codeword on fewer than $n$ lines through $P$ and a linear combination of some more lines through $P$.

We saw an odd codeword on 3 lines in \Cref{Ex:OddOn3}.
%Up to a coordinate transformation, each odd codeword on 3 lines is equal to \Cref{Ex:OddOn3} and a linear combination of the 3 lines covering its support, hence the odd codewords on 3 lines have been classified.
In \Cref{Thm:ConcurrentCodeword}, we will describe for general $n$ how to construct all odd codewords on $n$ lines.
This theorem is especially useful for the codes $\mc(2,p)$ with $p$ prime.

% We will see in \sam{\Cref{Sec:LinComb}} how the odd codewords on 3 lines relate to the remarkable set of points described in \Cref{Res:KissSomlai}.
% In \Cref{Sec:Concurrent}, we will describe what odd codewords on more than 3 lines look like.

\bigskip

Similar to \Cref{Res:SzWNonPrime}, small weight codewords of $\mc(2,p)$ with $p$ prime have been characterised, albeit with a more strict bound on the weight.

% \begin{res}[{\cite[Theorem 4.8, Corollary 4.10]{Szonyi:Weiner:14}}]
%  \label{Res:SzWPrime}
%  Suppose that $p \geq 19$ is prime and that $c \in \mc(2,p)$ with $\wt(c) \leq \max\{3p+1, 4p-22\}$.
%  Then $c$ is either a linear combination of at most $3$ lines or an odd codeword on 3 lines.
% \end{res}

\begin{res}[{\cite[Theorem 4.8, Corollary 4.10]{Szonyi:Weiner:14}}]
 \label{Res:SzWPrime}
 Suppose that $p \geq 19$ is prime and that $c \in \mc(2,p)$ with $\wt(c) \leq \max\{3p+1, 4p-22\}$.
 Then $c$ is either projectively equivalent to \Cref{Ex:OddOn3} plus a linear combination of the 3 concurrent lines covering its support, or it is the linear combination of at most 3 lines.
\end{res}

%Note that it follows, that in this case $c$ is either a linear combination of at most $3$ lines or an odd codeword on $3$ lines.
We will extend this characterisation to higher weights.

\begin{thm}
 \label{Thm:SmallWeight}
Suppose that $p \geq 37$ is prime and that $c \in \mc(2,p)$ with
\[
 \wt(c) \leq \begin{cases}
  4p+3 & \text{if } p \leq 47, \\
  5p-36 & \text{if } p \geq 53.
 \end{cases}
\]
 Then either
 \begin{enumerate}
  \item $c$ is a linear combination of at most 4 lines,
  \item $c$ is a linear combination of an odd codeword on 3 lines and another line,
  \item or $c$ is an odd codeword on 4 lines.
 \end{enumerate}
\end{thm}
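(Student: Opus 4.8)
The plan is to build on the existing classification in \Cref{Res:SzWPrime} and push the weight threshold upward through a finer analysis of how lines meet $\supp(c)$. Write $w = \wt(c)$. Since \Cref{Res:SzWPrime} already classifies every codeword of weight at most $4p-22$ and places it in the stated trichotomy, I may assume throughout that $w > 4p-22$, so that the slack to be managed is only $w - (4p-22) \le (5p-36)-(4p-22) = p-14$. The entire argument therefore lives in the narrow weight window $(4p-22,\,5p-36]$, and the task is to show that gaining this one extra ``line's worth'' of weight introduces no configurations beyond an odd codeword on four lines (case (3)) or an odd codeword on three lines with one extra line added (case (2)), apart from genuine four-line linear combinations (case (1)).

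First I would control the distribution of lines according to $|\ell \cap \supp(c)|$. The goal is a gap lemma in the spirit of the stability results underlying \Cref{Res:SzWPrime}: for a codeword of weight below $5p$, every line meets the support either in at most a bounded number of points or in at least $p - O(1)$ points, the latter being the \emph{long secants}. Because $w \le 5p-36 < 5p$, a short inclusion--exclusion estimate bounds the number of long secants: if $t$ long secants are in general position, their intersections with $\supp(c)$ already account for roughly $t\bigl(p - O(1)\bigr) - \binom{t}{2}$ points, which exceeds the weight bound once $t \ge 5$. The dichotomy that results is that either $\supp(c)$ is covered by at most four lines, steering $c$ towards cases (1) and (3), or there is a point $P$ through which at least three long secants pass, forcing a concurrent structure.

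Next I would carry out a reduction around the concurrency point $P$. After choosing coordinates placing $P$ conveniently, I subtract from $c$ a suitable linear combination of the lines through $P$ to remove the part of $c$ that is ``explained'' by whole lines, leaving a codeword $c'$ supported on a small pencil through $P$ and non-constant on each punctured line, i.e.\ (up to the subtracted lines) an odd codeword on at most four lines in the sense of \Cref{Df:OddCodeword}. Here the direction--projection dictionary of the first part of the paper is the natural bookkeeping tool: interpreting the restriction of $c'$ to the pencil as a multiset of affine points, the degree bound of \Cref{Thm:Pr} limits the number of concurrent lines genuinely required, matching exactly the three- and four-line possibilities. Feeding this into the explicit description of odd codewords in \Cref{Thm:ConcurrentCodeword} then identifies $c'$ and forces $c$ into case (2) or case (3), while the non-concurrent branch collapses to case (1).

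The main obstacle --- and the reason for the hypotheses $p \ge 37$ and the split between $p \le 47$ and $p \ge 53$ --- is the constant bookkeeping in the gap lemma and the concurrency count. I must rule out the borderline configurations newly admitted in the window $(4p-22,\,5p-36]$: four or five long secants that are almost but not quite concurrent, or a true odd codeword on four lines sitting beneath a superfluous fifth line. Keeping the error terms from the inclusion--exclusion and from the gap lemma strictly below the available slack $p-14$ is precisely what pins the threshold at $5p-36$ for large $p$, and the weaker bound $4p+3$ for $p \le 47$ reflects that this slack becomes too tight to absorb the fixed constants near the lower end of the admissible range.
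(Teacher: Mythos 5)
Your proposal gets the first step roughly right in spirit --- under the stated weight bounds one shows that $\supp(c)$ is covered by four lines --- but two things go wrong. First, the covering itself is not obtained by a quick inclusion--exclusion on ``long secants'': the paper invokes the stability results of Sz\H onyi and Weiner (\Cref{SzW}), whose part (2) with $k=4$ (valid only when $4 < \sqrt{p/2}-1$, i.e.\ $p \geq 53$, which is the real reason for the split at $p=47$, not tightness of constants) forces $\wt(c) \leq 4p+1$, and whose part (1) then yields a genuine cover of the \emph{entire} support by $\lceil \wt(c)/(p+1)\rceil \leq 4$ lines. Your proposed dichotomy ``either covered by four lines or three long secants are concurrent'' is both unproved and not what is needed: bounding the number of long secants in general position does not show that every point of the support lies on one of four lines, and three concurrent long secants would not cover the support either.

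Second, and more seriously, the case you dismiss in one clause --- ``the non-concurrent branch collapses to case (1)'' --- is the actual mathematical content of the proof. When the four covering lines are in general position (or when only three are concurrent), nothing you have set up shows that $c$ is a linear combination of those lines; the direction/projection machinery and \Cref{Thm:ConcurrentCodeword} only apply to pencils through a common point. The paper handles the general-position case by normalising the four lines to $X=0$, $Y=0$, $Z=0$, $X+Y+Z=0$, subtracting a linear combination $\sum_i \alpha_i \chi_{\ell_i}$ chosen to kill the values at the vertices, and then using the relation $c \cdot \chi_\ell = c \cdot \one$ on well-chosen lines to derive functional equations for a single function $f:\FF_p \to \FF_p$ recording the remaining values (namely $f(z^{-1})=-f(z)$, $f(z)=-f(-z-1)$, and $f(-a)=-f(a)$), which together force $f(z)=f(z+1)$, hence $f \equiv 0$. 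Without an argument of this kind your proof establishes only the concurrent cases, which are the easy part (they follow almost immediately from \Cref{Thm:ConcurrentCodeword} and the definition of odd codewords), so the proposal as written has a genuine gap exactly where the new work lies.
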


\subsection{From multisets to codewords and back}
 \label{Sec:SetsToCode}

There is a correspondence between multisets of points in $\ag(2,q)$ with $k$ mod-special directions and explicit ways to write odd codewords of $\mc(2,q)$ on $k$ lines as linear combinations of lines.
We start by going from a multiset to an odd codeword.

\begin{nota}
 We will write coordinates of points as $(d,-1,b)$ where $d \in \fqi$.
 In case $d = \infty$, this should be read as $(1,0,b)$.
 Similarly, $[\infty,-1,b]$ denotes the line $[1,0,b]$, and the equation $Y = \infty X + b$ should be read as $0 = X + b$
\end{nota}

Recall the projection function defined in \Cref{Df:Pr}.

\begin{prop}
 \label{Prop:SetToCode}
 Let $M$ be a multiset of points in $\ag(2,q)$.
 Let $D, \overline D \subseteq \fqi$ denote the sets of mod-special and mod-equidistributed directions of $M$ respectively.
 For each $(\overline d) \in \overline D$, let $r_{\overline d}$ be the integer in $\{0,\dots,p-1\}$ such that every line with slope $\overline d$ contains $r_{\overline d}$ points of $M$ modulo $p$.
 Define
 \[
  c_M = \sum_{(x,y) \in M} \chi_{[x,y,1]} - \sum_{\overline d \in \overline D} r_{\overline d} \chi_{[1,\overline d,0]}.
\]
 Then for any $d \in \fqi$ and $b \in \FF_q$,
 \begin{equation}
   \label{Eq:SetToCode}
  c_M(d,-1,b) = \begin{cases}
   \pr_{M,d}(b) & \text{if } d \in D, \\
   0 & \text{otherwise.}
  \end{cases}
 \end{equation}
 Therefore, $c_M$ is an odd codeword on the $|D|$ lines $X + dY = 0$ with $d \in D$, concurrent at $(0,0,1)$.
\end{prop}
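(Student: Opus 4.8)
The plan is to exploit that $c_M$ is, by its very definition, an $\FF_p$-linear combination of characteristic functions of lines, hence automatically a codeword of $\mc(2,q)$; the content of the statement is therefore the pointwise identity \eqref{Eq:SetToCode} together with the verification of the two defining properties of an odd codeword. First I would evaluate $c_M$ at an arbitrary point $Q = (d,-1,b)$ with $d \in \fqi$ and $b \in \FF_q$, reducing everything to incidence conditions in $\pg(2,q)$.

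For the first sum, $Q$ lies on the line $[x,y,1]$ precisely when $dx - y + b = 0$, that is, when $(x,y)$ lies on the line $Y = dX + b$ (for $d = \infty$, $Q = (1,0,b)$ lies on $[x,y,1]$ iff $x + b = 0$, the line $X + b = 0$). Hence $\sum_{(x,y) \in M} \chi_{[x,y,1]}(Q)$ counts, modulo $p$, the points of $M$ on the line with slope $d$ and intercept $b$, which is exactly $\pr_{M,d}(b)$. For the second sum, $Q$ lies on $[1,\overline d,0]$ iff $d = \overline d$, independently of $b$, so at most one term survives and
\[
 c_M(d,-1,b) = \pr_{M,d}(b) - \begin{cases} r_d & \text{if } d \in \overline D, \\ 0 & \text{if } d \in D. \end{cases}
\]
If $d \in D$ the second term vanishes and we recover $\pr_{M,d}(b)$; if $d \in \overline D$ then by the definition of $r_d$ every line with slope $d$ carries $r_d$ points modulo $p$, so $\pr_{M,d}(b) = r_d$ and the two terms cancel. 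This establishes \eqref{Eq:SetToCode}.

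To read off that $c_M$ is an odd codeword on the lines $\ell_d : X + dY = 0$ (with $d \in D$, and $d = \infty$ read as $Y = 0$), I would first observe that the points $(d,-1,b)$, $d \in \fqi$, $b \in \FF_q$, together with $P = (0,0,1)$ exhaust $\pg(2,q)$, so \eqref{Eq:SetToCode} pins down $c_M$ everywhere except at $P$. A direct evaluation gives $c_M(P) = -\sum_{\overline d \in \overline D} r_{\overline d}$, but $P$ lies on every line $\ell_d$, so it never escapes their union. Since the point $(d,-1,b)$ lies on $\ell_d$ and $c_M$ vanishes at every point with $d \notin D$, we obtain $\supp(c_M) \subseteq \bigcup_{d \in D} \ell_d$ with all $\ell_d$ concurrent at $P$; this is the first defining property.

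For the second property, fix $d \in D$ and note that $\ell_d \setminus \{P\} = \{(d,-1,b) : b \in \FF_q\}$, on which $c_M$ agrees with $\pr_{M,d}$ by \eqref{Eq:SetToCode}. Because $d$ is mod-special, not all lines with slope $d$ contain the same number of points of $M$ modulo $p$, so $\pr_{M,d}$ is a non-constant function on $\FF_q$; hence $c_M$ is non-constant on $\ell_d \setminus \{P\}$, as required. I expect the main obstacle to be purely one of bookkeeping: keeping the point/line duality and the $d = \infty$ convention mutually consistent, and checking that the enumeration of the points of $\pg(2,q)$ is genuinely exhaustive, so that the support containment is established everywhere and not merely on the affine-looking points $(d,-1,b)$.
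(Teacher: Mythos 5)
Your proposal is correct and follows essentially the same route as the paper: evaluate the first sum at $(d,-1,b)$ to get $\pr_{M,d}(b)$, observe that the subtracted line terms cancel exactly the mod-equidistributed directions, and deduce oddness from the non-constancy of $\pr_{M,d}$ for $d\in D$. The only difference is that you spell out the exhaustion of the points of $\pg(2,q)$ and the value at $(0,0,1)$, which the paper leaves implicit.
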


\begin{proof}
 First consider the codeword $c_M' = \sum_{(x,y) \in M} \chi_{[x,y,1]}$.
 Choose $d \in \fqi$ and $b \in \FF_q$.
 Then
 \[
  c_M'(d,-1,b) = |\sett{(x,y) \in M}{y = dx + b}| = \pr_{M,d}(b).
 \]
 By subtracting $\sum_{\overline d \in \overline D} r_{\overline d} \chi_{[1,\overline d,0]}$, we change this value to 0 if $(d)$ is mod-equidistributed, and leave it unchanged if $(d)$ is mod-special.
 This proves \Cref{Eq:SetToCode}.

 It also follows that $\supp(c_M)$ is contained in the union of the lines $X+dY = 0$ with $d \in D$.
 The fact that $c_M$ is an odd codeword follows from $c_M(d,-1,b) = \pr_{M,d}(b)$ not being a constant function of $b$ in case that $d \in D$.
\end{proof}

Conversely, every way to write an odd codeword on $k$ lines as an explicit linear combination of lines gives us a multiset of points in $\ag(2,q)$ with $k$ mod-special directions.

\begin{nota}
 \label{Not:Nu}
 For a prime number $p$, let $\nu: \FF_p \to \QQ$ be the function mapping an element $x \in \FF_p$ to the corresponding integer in $\{0, \dots, p-1\}$.
\end{nota}

\begin{prop}
 \label{Prop:CodeToSet}
 Suppose that $c \in \mc(2,q)$ is an odd codeword on the $k$ concurrent lines $Y = d X$ with $d \in D \subseteq \fqi$.
 Suppose that
 \[
  c = \sum_{\ell} \alpha_{\ell} \chi_\ell
 \]
 is an explicit way to write $c$ as a linear combination of lines.
 Construct the multiset $M$ of points in $\ag(2,q)$ where the point $(a,b)$ has multiplicity $\nu(\alpha_{[a,b,1]})$.
 Then
 \begin{enumerate}
  \item $D$ is the set of mod-special directions of $M$,
  \item for every direction $(\overline d) \in (\fqi) \setminus D$, all lines with slope $\overline d$ intersect $M$ in $\nu(-\alpha_{[1,\overline d, 0]})$ points modulo $p$.
 \end{enumerate}
\end{prop}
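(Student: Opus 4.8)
The plan is to express each projection polynomial $\pr_{M,d}$ directly in terms of the coefficients $\alpha_\ell$ and to deduce both assertions from a single identity. The key structural fact is that every line of $\pg(2,q)$ either has the form $[x,y,1]$ or passes through the centre $(0,0,1)$, in which case it is (up to scalar) of the form $[\ell_0,\ell_1,0]$; through a point $(d,-1,b)$ there is exactly one line of the latter type, namely $[1,d,0]$. Fixing $(d)\in\fqi$ and $b\in\FF_q$, I would evaluate $c=\sum_\ell\alpha_\ell\chi_\ell$ at $(d,-1,b)$. The lines $[x,y,1]$ through this point are precisely those with $y=dx+b$, i.e.\ the lines $[x,dx+b,1]$ with $x\in\FF_q$, so that
\[
 c(d,-1,b)=\sum_{x\in\FF_q}\alpha_{[x,dx+b,1]}+\alpha_{[1,d,0]}.
\]
Since $(x,dx+b)$ has multiplicity $\nu(\alpha_{[x,dx+b,1]})$ in $M$ and $\nu(t)\equiv t\pmod p$, the first sum equals the number of points of $M$ on the line $Y=dX+b$ reduced modulo $p$, which is exactly $\pr_{M,d}(b)$. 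This gives the master identity
\[
 c(d,-1,b)=\pr_{M,d}(b)+\alpha_{[1,d,0]}\qquad\text{in }\FF_p,
\]
for all $(d)\in\fqi$, $b\in\FF_q$ (with the conventions $(d,-1,b)=(1,0,b)$ and $[1,d,0]=[0,1,0]$ when $d=\infty$).

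From this identity both claims follow by recording on which lines $c$ is constant. The values $b\mapsto c(d,-1,b)$ are the values of $c$ on the affine points of the centre line $[1,d,0]$, whose remaining point is the centre $(0,0,1)$; for $d\in D$ these are the $k$ concurrent lines carrying $\supp(c)$, in agreement with \Cref{Prop:SetToCode}. Because $\pr_{M,d}$ and $b\mapsto c(d,-1,b)$ differ by the constant $\alpha_{[1,d,0]}$, the direction $(d)$ is mod-special exactly when $c$ is non-constant on $[1,d,0]\setminus\{(0,0,1)\}$. For $d\in D$ this is condition (2) of \Cref{Df:OddCodeword}, so every such $(d)$ is mod-special; for $d\notin D$ the centre line $[1,d,0]$ meets each support line only in $(0,0,1)$, whence $c$ vanishes on $[1,d,0]\setminus\{(0,0,1)\}$ and $\pr_{M,d}\equiv-\alpha_{[1,d,0]}$ is constant, so $(d)$ is mod-equidistributed. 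This proves that $D$ is precisely the set of mod-special directions, and at the same time yields the second claim: for $(\overline d)\notin D$ every line of slope $\overline d$ meets $M$ in $\pr_{M,\overline d}=-\alpha_{[1,\overline d,0]}$, i.e.\ in $\nu(-\alpha_{[1,\overline d,0]})$ points modulo $p$.

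I expect the main obstacle to be bookkeeping rather than any single hard step: one must fix the point--line duality between the affine plane containing $M$ and $\pg(2,q)$ consistently, and in particular identify correctly the centre line $[1,d,0]$ attached to the direction $(d)$ --- the same identification already implicit in \Cref{Prop:SetToCode}. A second delicate point is the passage between the integer multiplicities $\nu(\alpha_{[a,b,1]})$ defining $M$ and their residues modulo $p$; it is the congruence $\nu(t)\equiv t\pmod p$ that collapses the first sum above to $\pr_{M,d}(b)$. Finally, the degenerate direction $d=\infty$ must be treated separately, but with the stated conventions the determination of the lines through the evaluation point $(1,0,b)$ is completely analogous, so no new ideas are needed there.
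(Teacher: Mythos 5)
Your proof is correct and takes essentially the same route as the paper's: both evaluate $c=\sum_{\ell}\alpha_\ell\chi_\ell$ at the points $(d,-1,b)$ to obtain the identity $c(d,-1,b)=\pr_{M,d}(b)+\alpha_{[1,d,0]}$ and then read off both claims from the odd-codeword condition. Your write-up merely makes more explicit which lines pass through $(d,-1,b)$ and that the values $b\mapsto c(d,-1,b)$ are the values of $c$ on $[1,d,0]\setminus\{(0,0,1)\}$, which is the same identification the paper uses implicitly.
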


\begin{proof}
 Take $d \in \fqi$ and $b \in \FF_q$.
 Let $\ell$ be the line with equation $Y = d X + b$.
 Then
 \[
  c(d,-1,b) = \sum_{\substack{(x,y) \in \FF_q^2 \\ y = d x + b}} \alpha_{[x,y,1]} + \alpha_{[1,d,0]}
  = |M \cap \ell| + \alpha_{[1,d,0]}.
 \]
 Therefore, $(d)$ is a mod-special direction of $M$ if and only if $c(d,-1,b)$ is not a constant function of $b$.
 Since $c$ is an odd codeword, this is equivalent to $(d) \in D$.
 Moreover, if $(\overline d) \notin D$, then $c(\overline d,-1,b) = 0$ for all $b$, which means that all lines with slope $\overline d$ intersect $M$ in $\nu(-\alpha_{[1,\overline d, 0]})$ points modulo $p$.
\end{proof}

% Thus, a multiset $M$ of points in $\ag(2,q)$ with $k$ mod-special directions $(d_1), \dots, (d_k)$ gives rise to an odd codeword on $k$ lines $\ell_1, \dots, \ell_k$.
% Moreover, $M$ is not a union of multisets whose mod-special directions are proper subsets of $(d_1), \dots, (d_k)$ if and only if $c_M$ is not a linear combination of odd codewords whose support is contained in the union of a proper subset of $\ell_1, \dots, \ell_k$.

While each multiset $M$ gives rise to a unique odd codeword $c$, every odd codeword corresponds to many linear combinations of lines, and therefore to many multisets.
It is in general not easy to find the linear combinations corresponding to $c$.
We will explore how we can overcome this hurdle in \Cref{Sec:LinComb}.

\subsection{Structure of the paper}

\Cref{Sec:Preliminaries} contains the preliminaries of the paper.
The purpose of this section is to provide clarity regarding the notation used throughout the paper.

\Cref{Sec:ModSpecial} deals exclusively with directions determined by multisets of points.
We prove results related to special and mod-special directions of multisets.
Most notably, we prove Theorems \ref{Thm:Pr}, \ref{Thm:MinModSpecialDir}, and \ref{Thm:MinSpecialDir}.
In \Cref{Crl:Redei}, we discuss the implications of our results to minimal weighted multiple blocking sets of R\'edei type.

Sections \ref{Sec:Concurrent} and \ref{Sec:Small} deal exclusively with the codes $\mc(2,q)$.
In \Cref{Sec:Concurrent}, we state a powerful result by Delsarte, Goethals, and MacWilliams (\Cref{Res:DGM}) that describes the codewords of $\mc_k(n,q)$ as certain multivariate polynomials.
We use this to derive some properties concerning codewords of $\mc(2,q)$ whose support is contained in concurrent lines.
In \Cref{Sec:Small}, we prove \Cref{Thm:SmallWeight}, which extends the classification of small weight codewords in codes from projective planes of prime order.

In \Cref{Sec:LinComb}, we explore how to write an odd codeword as an explicit linear combination of lines.
As a result, we recover the set described in \Cref{Res:KissSomlai}.

In \Cref{Sec:Conclusion}, we conclude the paper by discussing interesting questions that remain unanswered.

\section{Preliminaries}
 \label{Sec:Preliminaries}

 Throughout this paper, $p$ denotes a prime number and $q = p^h$ a power of $p$.
 The finite field of order $q$ will be denoted as $\FF_q$.
 We denote the $n$-dimensional Desarguesian affine and projective geometry over $\FF_q$ by $\ag(n,q)$ and $\pg(n,q)$ respectively.
 This means that $\pg(n,q)$ consists of the subspaces of $\FF_q^{n+1}$, and the projective dimension of a subspace is one less than its vector space dimension.
 We use $\vspan{\cdot}$ to denote the linear span.
 If $x \in \FF_q^{n+1}$ is a non-zero vector, then $P = \vspan x$ is a point of $\pg(n,q)$.
 By slight abuse of notation, we will simply write this as $P = x$.
 The affine space $\ag(n,q)$ is obtained from $\pg(n,q)$ by choosing a hyperplane $\Pi_\infty$ and throwing away all subspaces which are completely contained in $\Pi_\infty$.

 We can also represent these geometries in another way, which we will only describe in case $n=2$.
 We can represent the points of $\ag(2,q)$ as the vectors of $\FF_q^2$.
 The lines of $\ag(2,q)$ are defined by an equation of the form
 \[
  a Y = d X + b,
 \]
 with $(a,d,b) \in \FF_q^3 \setminus \{\zero\}$.
 We call $d/a \in \fqi$ the \emph{slope} of the corresponding line.
 We can complete the affine plane to a projective plane as follows.
 For each element $d \in \fqi$, add a point $(d)$.
 Add a line $\ell_\infty$ containing exactly the points $(d)$ with $d \in \fqi$.
 The point $(d)$ lies exactly on the line $\ell_\infty$ and the lines with slope $d$.
 We call $(d)$ a \emph{direction}.
 We will identify the point $(x,y) \in \FF_q^2$ of $\ag(2,q)$ with the point $(x,y,1)$ of $\pg(2,q)$.
 The points $(d)$ with $d \in \FF_q$ and $(\infty)$ are identified with $(1,d,0)$ and $(0,1,0)$ respectively.

 We will use the notation $[a,b,c]$ for the line of $\pg(2,q)$ with equation $aX + bY + cZ = 0$.
 In particular, $\chi_{[a,b,c]}$ denotes the characteristic function of this line.

 % \begin{rmk}
 %  To aid notation, sometimes we will discuss lines of $\ag(2,q)$ defined by the equation $Y = d X + b$ with $d \in \fqi$.
 %  The meaning is obvious in case $d \in \FF_q$.
 %  In case $d = \infty$, the equation should be read as $X + b = 0$.
 % \end{rmk}

 \bigskip

 Multisets will play an important role in this paper.
 We opt for a compact and informal notation, but for the sake of clarity, we will give a formal definition here.
 A multiset $M$ can be seen as a pair $(S,\mu)$, with $S$ a set and $\mu: S \to \NN$ a function assigning a positive multiplicity to each element of $S$.
 If $f$ maps the elements of $S$ into some additive or multiplicative structure, then $\sum_{x \in M} f(x)$ and $\prod_{x \in M} f(x)$ take into account the multiplicities.
 Formally,
 \begin{align*}
  \sum_{x \in M} f(x) = \sum_{x \in S} \mu(x) f(x), &&
  \prod_{x \in M} f(x) = \prod_{x \in S} f(x)^{\mu(x)}.
 \end{align*}
 Similarly, if $T$ is a set, then $|T \cap M|$ takes into account multiplicities, or formally
 \[
  |T \cap M| = \sum_{x \in T \cap S} \mu(x).
 \]
 In particular, the size of $M$ is $\sum_{x \in S} \mu(x)$.
 
 We will sometimes define a multiset as a union of some sets $S_1, \dots, S_n$.
 This means that $M = (\bigcup_{i=1}^n S_i, \mu)$ with
 \[
  \mu: \bigcup_{i=1}^n S_i \to \NN: x \mapsto |\sett{i \in \{1, \dots, n\}}{x \in S_i}|.
 \]

\section{Multisets in AG\texorpdfstring{$\boldsymbol{(2,q)}$}{(2,q)} and their directions}
 \label{Sec:ModSpecial}

\subsection{Multisets admitting at most 2 special directions}

In this section, we investigate multisets $M$ of points in $\ag(2,q)$ whose size is divisible by $q$, and which have few special directions.
First, we observe the trivial case of having no special directions.

\begin{lm}
\label{0_spec_dir}
  If $M$ is a multiset in $\ag(2,q)$ admitting no special direction, then every point of $\ag(2,q)$ has the same multiplicity in $M$. \qed
\end{lm}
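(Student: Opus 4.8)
The plan is to lean on the hypothesis standing throughout this subsection, namely that $q$ divides $|M|$. Under this assumption, being equidistributed from a direction $(d)$ forces every line with slope $d$ to meet $M$ in \emph{exactly} $|M|/q$ points (counted with multiplicity), since $\floor{|M|/q} = \ceil{|M|/q} = |M|/q$. As $M$ has no special direction at all, this holds in every direction, so \emph{every} line of $\ag(2,q)$ meets $M$ in exactly $|M|/q$ points. This divisibility is the crucial ingredient and is genuinely needed: dropping it, one can take all $q^2$ points with multiplicity $1$ and add a single extra point, obtaining a non-constant multiset that is nevertheless equidistributed from every direction.

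First I would fix an arbitrary point $P$ and write $\mu(P)$ for its multiplicity in $M$ (with $\mu(P)=0$ if $P\notin M$). The key step is to double count the quantity $\sum_{\ell \ni P} |\ell \cap M|$, where the sum ranges over the $q+1$ lines of $\ag(2,q)$ through $P$, one in each direction. On the one hand, each such line meets $M$ in exactly $|M|/q$ points, so the sum equals $(q+1)|M|/q$. On the other hand, these $q+1$ lines cover $P$ itself once per line and every other point exactly once, since each $Q \neq P$ lies on the unique line $PQ$. Hence
\[
 \sum_{\ell \ni P} |\ell \cap M| = (q+1)\mu(P) + \sum_{Q \neq P} \mu(Q) = q\,\mu(P) + |M|.
\]

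Equating the two expressions gives $q\,\mu(P) + |M| = (q+1)|M|/q$, and solving yields $\mu(P) = |M|/q^2$. The right-hand side is independent of $P$, so every point of $\ag(2,q)$ carries the same multiplicity, which is exactly the claim.

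The computation is routine; the only point requiring care is recognising that the divisibility hypothesis is precisely what upgrades ``each line has $\floor{|M|/q}$ or $\ceil{|M|/q}$ points'' to the rigid ``each line has exactly $|M|/q$ points'', and it is this rigidity that makes the double count collapse to a single forced value of $\mu(P)$. As an alternative, one could instead equate the identity $\sum_\ell |\ell \cap M|^2 = q\sum_P \mu(P)^2 + |M|^2$ with the value $q(q+1)(|M|/q)^2$ coming from the constant line sizes and conclude via the equality case of the Cauchy--Schwarz inequality; the counting argument above is cleaner and avoids invoking any inequality.
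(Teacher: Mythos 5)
Your proof is correct and is precisely the standard count of the points of $M$ on the $q+1$ lines through a fixed point that the paper treats as immediate: the lemma is stated with no proof at all, and the identical counting argument appears in the paper's proof of the very next proposition. Your observation that the conclusion genuinely requires the section's standing hypothesis $q \mid |M|$ --- without it, all $q^2$ points of $\ag(2,q)$ with multiplicity one plus one extra point gives a non-constant multiset equidistributed from every direction --- is a worthwhile point that the lemma's statement leaves implicit.
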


The trivial example for multisets of size $nq$ admitting one special direction is the union of $n$ not necessarily distinct, parallel lines. Furthermore, it is easy to see that the converse holds as well.

\begin{prop}
\label{1_spec_dir}
  If $M$ is a multiset of size $nq$ in $\ag(2,q)$ admitting exactly one special direction, then $M$ must be the union of $n$ not necessarily distinct parallel lines.
\end{prop}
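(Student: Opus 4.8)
The plan is to give a direct double-counting argument that pins down the multiplicity of every point, avoiding the projection-function machinery entirely. Write $\mu$ for the multiplicity function of $M$, and let $(d_0)$ be the unique special direction. The crucial observation is that, since every direction other than $(d_0)$ is equidistributed, each line whose slope differs from $d_0$ meets $M$ in exactly $n$ points (counted with multiplicity): equidistribution for a multiset of size $nq$ means every such line contains $\floor{nq/q}$ or $\ceil{nq/q}$ points, i.e.\ precisely $n$.

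First I would fix a line $\ell$ with slope $d_0$ and an arbitrary point $P \in \ell$, and count incidences between the points of $M$ and the $q$ lines through $P$ whose slope is not $d_0$. Each of these ``transversal'' lines meets $M$ in exactly $n$ points, so the total over all $q$ of them is $qn$. On the other hand, I would regroup this count by the point of $M$ contributing it: $P$ lies on all $q$ transversals and so contributes $q\mu(P)$; every point $R \notin \ell$ lies on exactly one transversal through $P$ (namely the line $PR$, whose slope is not $d_0$) and so contributes $\mu(R)$ once; and the remaining points of $\ell \setminus \{P\}$ lie on no transversal. Hence
\[
 qn = q\,\mu(P) + \bigl(|M| - |M \cap \ell|\bigr) = q\,\mu(P) + nq - |M \cap \ell|,
\]
which rearranges to $\mu(P) = |M \cap \ell|/q$.

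The payoff is that the right-hand side does not depend on $P$, so $\mu$ is constant on $\ell$, equal to $m_\ell := |M \cap \ell|/q$; in particular $q \mid |M \cap \ell|$ and $m_\ell \in \ZZ_{\geq 0}$, so the restriction of $M$ to $\ell$ is exactly $m_\ell$ copies of the full line $\ell$. Finally I would sum $|M \cap \ell| = nq$ over all $q$ lines $\ell$ of slope $d_0$ to obtain $\sum_\ell m_\ell = n$, concluding that $M = \sum_\ell m_\ell\, \ell$ is a union of $n$ (not necessarily distinct) lines, all of slope $d_0$ and hence parallel.

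I do not expect a serious obstacle; the one point demanding care is the bookkeeping in the double count for multisets, namely ensuring that each transversal is weighted by the $M$-multiplicities of its points and that $P$'s own multiplicity is counted $q$ times. It is also worth remarking that the argument uses only that every direction except $(d_0)$ is equidistributed, so it never appeals to $(d_0)$ actually being special; the hypothesis of exactly one special direction enters solely to guarantee the transversal count $n$.
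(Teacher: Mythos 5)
Your proof is correct and is essentially the paper's own argument: both count points of $M$ on the lines through a point $P$, use that each of the $q$ lines with an equidistributed slope meets $M$ in exactly $n$ points, and deduce $\mu(P) = |M \cap \ell|/q$ for the line $\ell$ of the special slope through $P$, whence $\mu$ is constant on each such line. The only difference is bookkeeping (you sum over the $q$ transversals rather than all $q+1$ lines through $P$), and your write-up is somewhat more explicit than the paper's two-line version.
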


\begin{proof}
Let $P$ be a point of $M$ with multiplicity $m$. Counting the points of $M$ on the lines through $P$, we see that the line with the slope of the special direction must contain $mq$ points of $M$. This also means that every point on this affine line must have multiplicity exactly $m$.
\end{proof}

There is another simple construction where the number of special directions is at most $n$.

\begin{ex}
The union of $n$ lines in $\ag(2,q)$ with $k$ different slopes is a multiset of size $nq$ admitting at most $k$ special directions.
\end{ex}

% It follows from \Cref{Res:Ghidelli} that if $M$ has size $nq$ with $n \leq p$, but $M$ is not the union of $n$ lines, then $M$ determines at least $\ceil{\frac{p+n+2}{n+1}} \geq 3$ special directions.
Kiss and Somlai \cite[\S 2]{Kiss:Somlai} gave a simple combinatorial proof that the only sets of size $np$ in $\ag(2,p)$ determining only 2 special directions are unions of parallel lines, which also follows from \Cref{Res:Ghidelli}.
Their argument generalises to multisets in $\ag(2,q)$ as follows.

\begin{prop}
\label{2_spec_dirs}
 Suppose that a multiset $M$ of size $nq$ determines exactly $2$ special directions. Then $M$ is the union of lines through these special directions.
\end{prop}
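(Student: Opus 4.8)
The plan is to use the fact that all but two of the $q+1$ directions are equidistributed, and to extract structural information by a double count of the lines through a single point. First I would normalise: since affine transformations permute directions, transform line-intersection numbers by relabelling, and carry unions of lines to unions of lines, I may assume the two special directions are $(\infty)$ and $(0)$, so that the relevant lines are the vertical and horizontal ones. Write $m(x,y)$ for the multiplicity of $(x,y)$ in $M$, and put $R_x=\sum_y m(x,y)$ and $C_y=\sum_x m(x,y)$ for the number of points of $M$ on the vertical line $X=x$ and the horizontal line $Y=y$. The hypothesis, together with $|M|/q=n$, says that each of the remaining $q-1$ slopes is equidistributed (\Cref{Df:SpecialDir}), so every line of such a slope meets $M$ in exactly $n$ points.

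The heart of the argument is a double count applied to an arbitrary point $P=(x_0,y_0)$. The $q+1$ lines through $P$ cover the plane with every other point lying on exactly one of them, so summing $|M\cap\ell|$ over these lines counts each point of $M\setminus\{P\}$ once and counts $P$ itself $q+1$ times (with multiplicities), giving the total $(q+1)m(x_0,y_0)+(nq-m(x_0,y_0))=nq+q\,m(x_0,y_0)$. On the other hand, two of these lines are the vertical and horizontal lines through $P$, contributing $R_{x_0}+C_{y_0}$, while the other $q-1$ lines contribute exactly $n$ each. Equating the two expressions yields the central identity
\[
 R_{x_0}+C_{y_0}=n+q\,m(x_0,y_0), \qquad\text{equivalently}\qquad m(x_0,y_0)=\frac{R_{x_0}+C_{y_0}-n}{q},
\]
valid for every point $(x_0,y_0)$.

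It then remains to convert this additive formula into a genuine union of lines, that is, to find nonnegative integers $u(x)$ and $v(y)$ with $m(x,y)=u(x)+v(y)$; for then $M$ is precisely the vertical lines $X=x$ taken with multiplicity $u(x)$ together with the horizontal lines $Y=y$ taken with multiplicity $v(y)$, all passing through the two special directions. Picking $x^\ast$ and $y^\ast$ that minimise $R_x$ and $C_y$, I would set $v(y)=m(x^\ast,y)$ and $u(x)=m(x,y^\ast)-m(x^\ast,y^\ast)$. The identity immediately gives $u(x)+v(y)=(R_x+C_y-n)/q=m(x,y)$; moreover $v(y)$ is a multiplicity and $u(x)$ is a difference of multiplicities which the formula forces to be nonnegative, since $R_x\ge R_{x^\ast}$ implies $m(x,y^\ast)\ge m(x^\ast,y^\ast)$.

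I expect the main obstacle to be spotting the correct double count: summing over all lines through one point, rather than over a parallel class of lines, is exactly what makes the two special-direction contributions appear symmetrically and collapses everything into the clean central identity. The only remaining subtlety is ensuring that $u(x)$ and $v(y)$ are nonnegative integers and not merely rationals, but integrality is automatic because they are assembled from the integer-valued function $m$, and nonnegativity is built into the minimality of $R_{x^\ast}$ and $C_{y^\ast}$.
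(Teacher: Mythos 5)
Your proof is correct and follows essentially the same route as the paper: the identical count of the $q+1$ lines through a point yields the key identity $m(x,y)=(R_x+C_y-n)/q$, and the union-of-lines structure is then extracted via a minimality argument on the row and column sums. Your final step, defining $u(x)=m(x,y^\ast)-m(x^\ast,y^\ast)$ and $v(y)=m(x^\ast,y)$ directly, is a slightly cleaner bookkeeping than the paper's decomposition $a_x=a+qr_x$, $b_y=b+qs_y$, $t=t_1+t_2$, but it is the same argument in substance.
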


\begin{proof}
 We may suppose that the special directions are $(0)$ and $(\infty)$.
 Let $\mu(x,y)$ denote the multiplicity of $M$ at the point $(x,y)$.
 Suppose that the lines $X = x$ and $Y = y$ intersect $M$ in $a_x$ and $b_y$ points, respectively.
 By counting the lines through a point $(x,y)$, we see that
 \[
  (q-1) n + a_x + b_y = qn + q \mu(x,y).
 \]
 In other words,
 \[
  \mu(x,y) = \frac{a_x + b_y - n}{q}.
 \]
 This means that $a_x + b_y \equiv n \pmod{q}$ for all $x$ and $y$.
 Fixing $y$ and varying $x$, we see that there exist integers $r_x$ such that $a_x = a + q r_x$ for some $a$ with $0 \leq a < q$.
 Likewise, $b_y = b + q s_y$ for some integers $s_y$ and some $b$ with $0 \leq b < q$.
 Moreover, $a + b \equiv n \pmod q$, hence $a+b = n - qt$ for some integer $t$.

 Plugging this into the equation, we see that
 \[
  \mu(x,y) = r_x + s_y - t.
 \]
Take $x$ with $r_x$ minimal and $y$ with $s_y$ minimal.
Then $\mu(x,y) \geq 0$, hence we can write $t = t_1 + t_2$ with $0 \leq t_1 \leq r_x$ for each $x$ and $0 \leq t_2 \leq r_y$ for each $y$.
This means that we can obtain $M$ as the union of $r_x - t_1$ times the line $X = x$ and $s_y - t_2$ times the line $Y = y$ for all $x$ and $y$.
\end{proof}

The next corollary follows from the proposition above and from Proposition \ref{1_spec_dir} and \ref{0_spec_dir}.

\begin{crl}
Multisets of size $nq$ in AG$(2,q)$ admitting fewer than $3$ special directions are unions of $n$ parallel lines from at most $2$ different parallel classes.    
\end{crl}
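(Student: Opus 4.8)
The plan is to observe that ``fewer than $3$ special directions'' means $0$, $1$, or $2$ special directions, and to treat these three cases by invoking the three preceding results. In each case I would confirm two things: that $M$ decomposes into lines from at most two parallel classes, and that the total number of these lines (counted with multiplicity) is exactly $n$. The latter count follows uniformly from the fact that every affine line carries $q$ points, so a union of $k$ lines counted with multiplicity has size $kq$; comparing with $|M| = nq$ yields $k = n$.

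First I would handle the case of no special direction. By \Cref{0_spec_dir}, every point of $\ag(2,q)$ has a common multiplicity $m$ in $M$, whence $|M| = mq^2 = nq$ and so $n = mq$. Thus $M$ is the whole affine plane taken $m$ times; picking any single parallel class and repeating each of its $q$ lines $m$ times exhibits $M$ as a union of $mq = n$ parallel lines from one parallel class.

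The case of exactly one special direction is immediate from \Cref{1_spec_dir}, which already asserts that $M$ is the union of $n$ parallel lines, all from the single parallel class belonging to that special direction. For exactly two special directions, I would appeal to \Cref{2_spec_dirs} to write $M$ as a union of lines through the two special directions, i.e.\ lines from at most two parallel classes, and then apply the size count above to conclude that exactly $n$ lines are used.

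I do not expect any genuine obstacle here, as all the structural content resides in the three cited results. The only point requiring care is the bookkeeping of the line count; in particular, in the zero-direction case one must note that the size equation $mq^2 = nq$ forces $q \mid n$, which is precisely what is needed for $M$ to split into $n$ lines.
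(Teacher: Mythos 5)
Your proposal is correct and matches the paper's argument, which simply cites \Cref{0_spec_dir}, \Cref{1_spec_dir}, and \Cref{2_spec_dirs} for the three cases; your additional bookkeeping (that a union of $k$ lines counted with multiplicity has size $kq$, forcing $k=n$, and that $q\mid n$ in the zero-direction case) is exactly the detail the paper leaves implicit.
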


\subsection{Proofs of Theorems \texorpdfstring{\ref{Thm:Pr}, \ref{Thm:MinModSpecialDir}, \ref{Thm:MinSpecialDir}}{}}
 \label{Sec:ProofsOfTheorems}

\begin{prop}
 \label{Prop:AddRedPol}
 Consider a multiset $M$ of points in $\ag(2,q)$.
 Suppose that $(\infty)$ is a mod-special direction and let $E \subseteq \FF_q$ denote the set of mod-equidistributed directions of $M$.
 For each $e \in E$, $\pr_{M,e}(b)$ is a constant function, and we denote this constant as $r_e$.
 Define the polynomial
 \[
 F_M(T,D,B) = \sum_{(x,y) \in M} 1 - (yT - (Dx + B))^{q-1} - \sum_{e \in E} r_e (1-(D-eT)^{q-1}).
\]
Then for $d, b \in \FF_q$
\begin{align*}
 F_M(1,d,b) &= \begin{cases}
  \pr_{M,d}(b) & \text{if } d \notin E, \\
  0 & \text{if } d \in E
 \end{cases} \\
 F_M(0,1,b) &= \pr_{M,\infty}(b).
\end{align*}
\end{prop}

\begin{proof}
 Take a vector $(t,d,b)$ with $(t,d) \neq \zero$.
 Then the polynomial $1 - (Yt - (dX + b))^{q-1}$ in $X$ and $Y$ is $1$ on points of the line $tY = dX+b$ and 0 otherwise. Hence the first term of $F_M$ counts the number of points of $M$ on the line $tY = dX +b$ modulo $p$.  Likewise, $(1-(D-dT)^{q-1})$ is 1 if $D = dT$ and 0 otherwise and so the result follows. 
\end{proof}

\noindent {\bf \Cref{Thm:Pr}.} {\it 
 Let $M$ be a multiset of points in $\ag(2,q)$ determining $k \geq 2$ mod-special directions.
 Then for every direction $(d)$, $\deg \pr_{M,d}(B) \leq k-2$.
}

\begin{proof}
The theorem holds trivially for mod-equidistributed directions. Let $(d')$ be a mod-special direction. Note that an affine transformation does not alter the degree of the projection functions. Thus, since $k \geq 2$, we may assume that $(\infty)$ is mod-special and $d' \neq \infty$. In \Cref{Prop:AddRedPol}, substitute $1$ for $T$ and consider the two-variable polynomial
\[
F_M(1, D, B) = \sum_{i=0}^{q-1} f_i(D) B^{q-1-i},
\]
where $\deg f_i(D) \leq i$. By \Cref{Prop:AddRedPol}, we have $F_M(1, d, B) \equiv 0$ when $(d)$ is a mod-equidistributed direction, since $\deg_B F_M \leq q-1$. Note that there are $q+1-k$ such directions, implying $f_i(D) \equiv 0$ for $i < q+1-k$ and hence $\deg F_M(1, d', B) \leq k - 2$. The result now follows from \Cref{Prop:AddRedPol}.
\end{proof}

\noindent {\bf \Cref{Thm:MinModSpecialDir}.}  {\it
 Let $M$ be a multiset of points in $\ag(2,q)$, with $q=p^h$ and $p$ prime.
 Then $M$ has either 0 or at least $\frac qp + 2$ mod-special directions.
}

\begin{proof}
 Suppose that $(d)$ is a mod-special direction.
 Define the polynomial $f(B) = \pr_{M,d}(B)^p - \pr_{M,d}(B)$.
 Then $\deg f = p \deg \pr_{M,d} > 0$.
 Since $\pr_{M,d}$ only takes values in $\FF_p$, $f$ is zero everywhere and therefore its degree is at least $q$.
 Thus, $\deg \pr_{M,d} \geq q/p$. Note that counting the points of $M$ on the lines of through a point in $\ag(2,q)$ yields that $M$ cannot have exactly $1$ mod-special direction and hence
 the theorem follows from \Cref{Thm:Pr}.
\end{proof}

\begin{lm}
\label{less than p points}
Let $M$ be a multiset of size $nq$ in $\ag(2,q)$, $q=p^h$, $h > 1$. Assume that $M$ admits $k \leq \frac{p}{n}$ special directions.
Then any line whose slope is a special direction and which passes through a point not in $M$ contains fewer than $p$ points of $M$. 
\end{lm}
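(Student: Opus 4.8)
The plan is a direct double-counting argument obtained by projecting from a point off $M$. First I would fix a line $\ell$ whose slope $(d)$ is a special direction and which, by hypothesis, passes through some affine point $P$ having multiplicity $0$ in $M$. The key feature of this $P$ is that, since $P \notin M$, every point of $M$ lies on exactly one of the $q+1$ lines of $\pg(2,q)$ through $P$. Hence, counting with multiplicities, $\sum_{m \ni P} |M \cap m| = |M| = nq$, where the sum runs over the lines $m$ through $P$.

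Next I would split the lines through $P$ according to whether their slope is special or equidistributed. Because $|M| = nq$ and $q \mid |M|$, we have $\floor{|M|/q} = \ceil{|M|/q} = n$, so a direction is equidistributed precisely when every line with that slope meets $M$ in exactly $n$ points. At most $k$ of the $q+1$ slopes are special; let $j \le k$ denote the number of lines through $P$ with special slope, noting $j \ge 1$ since $\ell$ is one of them. The remaining $q+1-j$ lines through $P$ have equidistributed slope and therefore contribute exactly $n(q+1-j)$ points of $M$.

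Combining the two observations, the $j$ special lines through $P$ together contain $nq - n(q+1-j) = n(j-1)$ points of $M$. Since $\ell$ is one of these special lines and all intersection numbers are non-negative, I obtain
\[
 |M \cap \ell| \le n(j-1) \le n(k-1) = nk - n \le p - n < p,
\]
where the penultimate inequality uses $k \le p/n$ and the last uses $n \ge 1$. This is the desired bound.

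The argument is entirely elementary, so I do not expect a genuine obstacle; the only two points requiring care are that a point of multiplicity zero really does let the lines through it partition $M$ so that the total is exactly $nq$, and that the notion of \emph{equidistributed} forces each such line to contain exactly $n$ points — this is precisely where the divisibility $q \mid |M|$ enters. (Note that the hypothesis $h > 1$ is not needed for this estimate; it is inherited from the surrounding context.)
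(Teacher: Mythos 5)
Your proof is correct and is essentially identical to the paper's argument: both count the points of $M$ on the $q+1$ lines through a point $P \notin M$, use that each equidistributed line contributes exactly $n$ points, and conclude that the special lines through $P$ together carry only $(k-1)n \leq p - n < p$ points. Your remark that $h>1$ is not needed for this estimate is also accurate.
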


\begin{proof}
Let $P$ be a point not in $M$ and let us count the number of points of $M$ on the lines through $P$. 
Each line whose slope is an equidistributed direction contains exactly $n$ points.
Hence, on the lines through $P$ with slopes of a special direction, we see a total of $qn - (q + 1 - k)n = (k - 1)n$ points. By the assumption of the lemma, this is smaller than $p$.   
\end{proof}

\noindent{\bf \Cref{Thm:MinSpecialDir}.} {\it
 Let $M$ be a multiset of $nq$ points in $\ag(2,q)$, with $q=p^h$, $p$ prime, and $h > 1$.
 If $M$ is not the union of $n$ (not necessarily distinct) lines, then $M$ determines at least $\frac pn + 1$ special directions.
}

\begin{proof}
%For $n=1$, it is ***, so assume $n > 1$.
Suppose that $M$ determines at most $\frac pn$ special directions.
Since $\frac{p}{n} < \frac{q}{p} + 1$ if $h> 1$, \Cref{Thm:MinModSpecialDir} implies that every direction is mod-equidistributed.
Now suppose that $(d)$ is a special direction, and that every line with slope $d$ meets $M$ in $r$ points modulo $p$, with $0 \leq r < p$.
Let $\ell_b$ denote the line with equation $Y = dX + b$.
For each $b \in \FF_q$, let $m_b$ be the minimum multiplicity with respect to $M$ of the points on the line $\ell_b$.
Let $M'$ be the multiset obtained from $M$ by reducing the multiplicity of each point on $\ell_b$ by $m_b$ for each $b \in \FF_q$.
On each line $\ell_b$, we reduced the total multiplicities by $q m_b$, so $\ell_b$ still meets $M$ in $r$ points modulo $p$.
If $\ell$ is any line with slope different than $d$, then the sum of the multiplicities of the points on $\ell$ is reduced by $\sum_{b \in \FF_q} m_b$.
Therefore, any direction $(d') \neq (d)$ is special for $M'$ if and only if it is special for $M$.
Hence, $M'$ also has at most $\frac pn$ special directions.
Note that now every line with slope $d$ contains a point outside of $M'$. If $(d)$ was a special direction of $M'$, then by \Cref{less than p points}, every line with slope $d$ would contain fewer than $p$ points of $M'$. 
But then every line with slope $d$ must contain exactly $r$ points, which means that $M'$ is equidistributed from $(d)$.

Repeating this reduction process for every special direction of $M$ one by one, proves that $M$ is a union of lines.
\end{proof}

The above results have some consequences for weighted multiple blocking sets in PG$(2,q)$, $q=p^h$.
A \emph{weighted $n$-fold blocking set} of $\pg(2,q)$ is a multiset $M$ of points which intersects every line in at least $n$ points.
A point $P$ of $M$ is \emph{essential} if reducing the multiplicity of $P$ by $1$ results in a multiset that is no longer an $n$-fold weighted blocking set.
This is equivalent to the existence of a line through $P$ intersecting $M$ in exactly $n$ points.
We call $M$ \emph{minimal} if all of its points are essential.
We call a weighted multiple $n$-fold blocking set $M$ \emph{R\'edei type} if there exists a line $\ell$ such that the number of points of $M$ not lying on $\ell$ is $nq$.
The line $\ell$ is called a \emph{R\'edei line} of $M$.

\begin{crl}
 \label{Crl:Redei}
 Let $M$ be a minimal weighted $n$-fold R\'edei type blocking set in PG$(2,q)$, $q=p^h$, $h \geq 1$ and $n<q$, and let $\ell$ be a R\'edei line of $M$. 
\begin{enumerate}
 \item \label{Crl:Redei:1}
 If $\ell$ meets $M$ in at most $\frac{q}{p} + 1$ different points, then $M$ intersects every line in $n \mod p$ points.
 \item \label{Crl:Redei:2} If $h > 1$ and $\ell$ meets $M$ in fewer than $\frac{p}{n} + 1$ different points, then $M$ is the union of $n$ not necessarily distinct lines.
\end{enumerate}
\end{crl}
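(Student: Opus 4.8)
The plan is to treat the Rédei line $\ell$ as the line at infinity, so that $M_a := M\setminus\ell$ is a multiset of $nq$ affine points in $\ag(2,q)$ and the points of $M$ on $\ell$ become directions; I write $w_d$ for the multiplicity in $M$ of the direction $(d)$. The engine common to both parts is the observation that a direction $(d)$ with $w_d=0$ is automatically \emph{equidistributed} for $M_a$: since $M$ is $n$-fold blocking and $(d)\notin M$, every line of slope $d$ meets $M_a$ in at least $n$ points, while the $q$ parallel lines of slope $d$ carry exactly $nq$ points of $M_a$ in total, forcing each of them to meet $M_a$ in exactly $n$. Hence every special and every mod-special direction of $M_a$ is a point of $M$ on $\ell$, and so the number of special (resp.\ mod-special) directions of $M_a$ is at most the number $t$ of distinct points of $M$ on $\ell$.

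For part \ref{Crl:Redei:1}, the hypothesis gives $t\le \frac qp+1$, so $M_a$ has at most $\frac qp+1<\frac qp+2$ mod-special directions; by \Cref{Thm:MinModSpecialDir} it must have none. Thus for each direction $d$ all affine lines of slope $d$ meet $M_a$ in a common residue modulo $p$, and adding the fixed contribution $w_d$ of $(d)$ shows all lines of slope $d$ meet $M$ in a common residue $s_d \bmod p$. It remains to show $s_d\equiv n$ for every $d$ and that $\ell$ too meets $M$ in $\equiv n$. Here I distinguish whether $\ell$ is an exact $n$-secant. If $|M\cap\ell|=n$, then picking any affine point $P\notin M$ (one exists since $nq<q^2$) and counting $M$ on the $q+1$ lines through $P$ gives total $nq+n=(q+1)n$; as each is $\ge n$ they are all exactly $n$, so the slope-$d$ line through $P$ is an exact $n$-secant and $s_d\equiv n$ for all $d$. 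If instead $|M\cap\ell|>n$, then $\ell$ is not an $n$-secant, so by minimality every essential point $(d)\in M$ on $\ell$ lies on an affine $n$-secant of slope $d$, giving $s_d\equiv n$; combined with the $w_d=0$ case this yields $s_d\equiv n$ for all $d$, and the same count through $P\notin M$ (now giving $\sum_d s_d\equiv |M\cap\ell|\bmod p$, since $nq\equiv 0$) forces $|M\cap\ell|\equiv (q+1)n\equiv n$.

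For part \ref{Crl:Redei:2}, the hypothesis gives $t<\frac pn+1$, so $M_a$ has fewer than $\frac pn+1$ special directions; since $h>1$, the contrapositive of \Cref{Thm:MinSpecialDir} shows $M_a$ is a union of $n$ affine lines $m_1,\dots,m_n$. I then reconstruct all of $M$ from their projective completions $\bar m_i$. Writing $u_d$ for the number of $m_i$ of slope $d$ (with multiplicity), a line of slope $d$ distinct from all $m_i$ meets $M_a$ in $n-u_d$ points, so blocking gives $(n-u_d)+w_d\ge n$, i.e.\ $w_d\ge u_d$. Conversely, if $w_d>u_d$ for some $d$, then $(d)\in M$ but every line through $(d)$ meets $M$ in more than $n$ points — the generic affine lines in $(n-u_d)+w_d>n$, the lines equal to some $m_i$ in at least $q>n$, and $\ell$ in $\sum_{d'}w_{d'}\ge\sum_{d'}u_{d'}=n$ with strict inequality — contradicting that $(d)$ is essential. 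Hence $w_d=u_d$ for all $d$, so $M=\bigcup_i\bar m_i$ is a union of $n$ lines.

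The main obstacle is the final step of part \ref{Crl:Redei:1}: passing from ``$M_a$ has no mod-special directions'' to ``every line meets $M$ in $\equiv n\pmod p$''. Mod-equidistribution only says the intersection residues are constant along each parallel class, and because $|M_a|=nq\equiv 0\pmod p$ the obvious counting does \emph{not} pin down the common value $s_d$. Extracting the value $n$ genuinely requires minimality together with the case split on whether the Rédei line is an exact $n$-secant; verifying that an affine point outside $M$ always exists and that each such counting argument is consistent is the delicate part.
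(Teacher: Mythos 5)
Your proof is correct and takes essentially the same approach as the paper: pass to the affine part, observe that directions not in $M\cap\ell$ are automatically equidistributed, invoke \Cref{Thm:MinModSpecialDir} (resp.\ \Cref{Thm:MinSpecialDir}), and finish with minimality plus counting through a point of multiplicity zero, including the same case split on whether $\ell$ is an exact $n$-secant. The only difference is that you spell out the reconstruction of the multiplicities on $\ell$ in part (\ref{Crl:Redei:2}), which the paper leaves as ``immediate''; your elaboration is sound.
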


\begin{proof}
(1)
Suppose that $\ell = \ell_\infty$, i.e.\ the line with equation $Z = 0$.
Note that the lines containing no point of $M \cap \ell$ must intersect $M$ in exactly $n$ points (hence in $n$ modulo $p$ points). 
Let $P$ be point of $M$ on $\ell_\infty$.
Then by \Cref{Thm:MinModSpecialDir} there exists a value $r$ so that every affine line through $P$ meets $M$ in $r \mod p$ points. 

First assume that $\ell$ intersects $M$ in exactly $n$ points. Then every line containing a point outside $M$ must contain exactly $n$ points. Since $n < q$, we have an affine point $Q$ which is not in $M$ and so the line $PQ$ contains exactly $n$ points. It follows that $P$ must have multiplicity $n-r \mod p$ and so every line through $P$ intersects $M$ in $n \mod p$ points.

Now assume that $\ell$ contains more than $n$ points from $M$. Since $P$ is essential to $M$, there must be an affine line containing exactly $n$ points and so the proof for affine lines follows similarly as before. Counting the number of points through an affine point not in $M$ we see that $|M|$ is $n \mod p$. 
So repeating this for a point of $\ell_\infty$ which is not in $M$, we get that $\ell_\infty$ intersects $M$ in $n \mod p$ points too.

\bigskip

(2) This follows immediately from \Cref{Thm:MinSpecialDir}.
 \end{proof}

\begin{rmk}
Similar results were obtained in \cite[Theorem 2.13, Proposition 2.15]{FSSzW}.
But in \Cref{Crl:Redei}, (\ref{Crl:Redei:1}) also applies for relatively large $n$, and in (\ref{Crl:Redei:2}) we have a different condition to ensure that $M$ is the weighted sum of lines.     
\end{rmk}

\section{Odd codewords}
 \label{Sec:Concurrent}

In this section, we have a look at odd codewords as defined in \Cref{Df:OddCodeword}.

Delsarte, Goethals, and MacWilliams \cite{delsartegoethalsmacwilliams} described a useful way of recognising the codewords of $\mc_k(n,q)$ by expressing them as polynomials.
Let $\mp$ denote the set of points of $\pg(n,q)$.
To each function $c: \mp \to \FF_p$ we can assign a unique polynomial $F_c \in \FF_q[X_0, \dots, X_n]$ such that
\begin{enumerate}
 \item for each point $P$ of $\pg(n,q)$, every non-zero coordinate vector $(x_0, \dots, x_n)$ of $P$ satisfies $F_c(x_0, \dots, x_n) = c(P)$,
 \item the degree of $F_c$ in each variable is at most $q-1$,
 \item the total degree of $F_c$ is at most $n(q-1)$.
\end{enumerate}
Indeed, as a function $\FF_q^{n+1} \to \FF_p \subseteq \FF_q$, $F_c$ is uniquely determined except for its value in $\zero$.
Every function $\FF_q^{n+1} \to \FF_q$ can uniquely be represented as a multivariate polynomial with degree at most $q-1$ in each variable.
This means that the only possible term in $F_c$ of degree $(n+1)(q-1)$ is $X_0^{q-1} \cdot \ldots \cdot X_n^{q-1}$.
We can make sure that this monomial does not occur in $F_c$ by adding a scalar multiple of $(X_0^{q-1}-1) \cdot \ldots \cdot (X_n^{q-1}-1)$, since this only changes the value of the function in $\zero$.
Note that $F_c$ should take the same value on non-zero linearly dependent vectors, which is equivalent to each monomial occurring in $F_c$ having as total degree a multiple of $q-1$.
Hence, if we ensure that $\deg F_c < (n+1)(q-1)$, then $\deg F_c \leq n(q-1)$.

\begin{res}[{\cite[Theorem 5.2.3]{delsartegoethalsmacwilliams}}]
 \label{Res:DGM}
 Let $c$ be a function from the set of points of $\pg(n,q)$ to $\FF_p$.
 Then $c \in \mc_k(n,q)$ if and only if the total degree of $F_c$ is at most $(n-k)(q-1)$.
\end{res}

Recall the odd codewords from \Cref{Df:OddCodeword}.
So far, we have only seen an example of an odd codeword on 3 lines.
Now we will show the existence of odd codewords on $n > 3$ lines, which do not arise in a trivial way as linear combinations of odd codewords on fewer than $n$ lines.

\begin{thm} 
 \label{Thm:ConcurrentCodeword}
 Consider a set $D \subseteq \FF_q$.
 Suppose that $F \in \FF_q[X,Y,Z]$ has degree at most $q-1$ in each variable.
 Then $F$ is equal to $F_c$ for some codeword $c \in \mc(2,q)$ whose support is contained in the union of the lines $X=0$ and $Y = d X$ with $d \in D$, if and only if $F$ only takes values in $\FF_p$ and $F$ is of the form
 \begin{equation}
  \label{Eq:DGM}
  F(X,Y,Z) = G(X,Y,Z) \prod_{\overline d \in \FF_q \setminus D} (Y - \overline d X) + F(0,0,0) (1 - X^{q-1}),
 \end{equation}
 with $G$ a homogeneous polynomial of degree $|D| - 1$ or the zero polynomial.
\end{thm}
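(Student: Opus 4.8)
The plan is to leverage the polynomial description of codewords in \Cref{Res:DGM} together with the uniqueness of reduced polynomial representations. Since $\mc(2,q) = \mc_1(2,q)$, a function $c$ lies in the code exactly when $\deg F_c \le q-1$. Moreover, because $F_c$ is constant on the $\FF_q^*$-orbit of each nonzero vector, every one of its monomials has total degree divisible by $q-1$; combined with $\deg F_c \le q-1$, this forces any such $F = F_c$ to split as $F = F(0,0,0) + F_{q-1}$, with $F_{q-1}$ homogeneous of degree $q-1$. I will also use two elementary facts: $1 - X^{q-1}$ is exactly the characteristic function of the line $X = 0$, and, writing $\Pi = \prod_{\overline d \in \FF_q \setminus D}(Y - \overline d X)$, a point $(x,y,z)$ with $x \neq 0$ lies off the union of the lines precisely when $y/x \in \FF_q \setminus D$, which is precisely when $\Pi$ vanishes there.

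For the forward implication, I would start from $F = F_c$ with $c \in \mc(2,q)$ supported on the union of lines; then $F$ takes values in $\FF_p$ and $\deg F \le q-1$ by \Cref{Res:DGM}. Setting $\tilde F = F - F(0,0,0)(1 - X^{q-1})$, the splitting above makes $\tilde F$ homogeneous of degree $q-1$, and the desired form is equivalent to the divisibility $\Pi \mid \tilde F$. Since the linear forms $Y - \overline d X$ with $\overline d \in \FF_q \setminus D$ are pairwise coprime, it suffices to prove each divides $\tilde F$, i.e.\ that $\tilde F(X, \overline d X, Z) \equiv 0$. The key trick is to evaluate at $X = 1$: as $1 - 1^{q-1} = 0$, one gets $\tilde F(1, \overline d, Z) = F(1, \overline d, Z)$, whose value at each $z$ is $c$ evaluated at $(1, \overline d, z)$; all these points lie off the union since their slope $\overline d$ is not in $D$, so $\tilde F(1, \overline d, z) = 0$ for every $z \in \FF_q$. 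A polynomial in $Z$ of degree at most $q-1$ that vanishes at all $q$ elements of $\FF_q$ is identically zero, and homogeneity promotes $\tilde F(1, \overline d, Z) \equiv 0$ to $\tilde F(X, \overline d X, Z) \equiv 0$. Hence $\Pi \mid \tilde F$, and comparing degrees in $\tilde F = G\Pi$ yields $G$ homogeneous of degree $|D| - 1$ (or $G = 0$).

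For the converse, I would assume $F$ has the stated form and takes values in $\FF_p$. Both summands have total degree at most $q-1$, and each monomial has degree $0$ or $q-1$, hence divisible by $q-1$; therefore $F$ is constant on $\FF_q^*$-orbits and defines a function $c : \mp \to \FF_p$. To identify $F$ with $F_c$, I would note that $F - F_c$ is reduced and vanishes on every nonzero vector, so it is a scalar multiple of $\prod_i (1 - X_i^{q-1})$; as neither $F$ (total degree $\le q-1$) nor $F_c$ contains the top monomial $X^{q-1}Y^{q-1}Z^{q-1}$, that scalar must be $0$, giving $F = F_c$ and, via \Cref{Res:DGM}, $c \in \mc(2,q)$. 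The support claim is then immediate: at a point off the union, $x \neq 0$ kills the $F(0,0,0)(1 - X^{q-1})$ term while $y/x \in \FF_q \setminus D$ makes a factor of $\Pi$ vanish, so $c$ is zero there.

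I expect the divisibility $\Pi \mid \tilde F$ to be the main obstacle, and the cleanest route is the evaluation-at-$X=1$ argument combined with homogeneity, which sidesteps any two-variable interpolation. The remaining care goes into the two degenerate cases — $D = \emptyset$, where $\deg \Pi = q$ forces $G = 0$ and $F$ is merely a multiple of the characteristic function of $X = 0$, and $D = \FF_q$, where $\Pi$ is the empty product $1$, the support condition is vacuous, and the statement collapses to the generic splitting $F = F(0,0,0) + F_{q-1}$ — and into the identification $F = F_c$ through the absence of the monomial $X^{q-1}Y^{q-1}Z^{q-1}$.
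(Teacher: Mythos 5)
Your proposal is correct and follows essentially the same route as the paper: both directions hinge on subtracting $F(0,0,0)(1-X^{q-1})$ to obtain a homogeneous polynomial $F^*$ of degree $q-1$, establishing the divisibility by each $Y-\overline d X$ via evaluation at $(1,\overline d, Z)$ and vanishing of $c$ off the union of lines, and invoking \Cref{Res:DGM} for membership in the code. Your additional care with the identification $F = F_c$ (via the missing top monomial) and the degenerate cases $D=\emptyset$, $D=\FF_q$ only fills in details the paper leaves implicit.
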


\begin{proof}
 Denote $\FF_q \setminus D$ as $\overline D$.
 Suppose that $F \in \FF_q[X,Y,Z]$ has degree at most $q-1$ in each variable and only takes values in $\FF_p$.
 By \Cref{Res:DGM}, $F = F_c$ for some codeword $c \in \mc(2,q)$ if and only if every term in $F$ has degree either $0$ or $q-1$.
 
 First suppose that $F$ is of the form written in \Cref{Eq:DGM}.
 It follows immediately that $F = F_c$  for some $c \in \mc(2,q)$.
 Moreover, $F(x,y,z) = 0$ whenever $x \neq 0$ and $y = \overline d x$ for some $\overline d \in \overline D$.
 Hence, $\supp(c)$ is contained in the aforementioned union of lines.

 Conversely, suppose that $F = F_c$ with $c \in \mc(2,q)$ a codeword whose support is contained in the aforementioned union of lines.
 Then all monomials in $F$ have degree either 0 or $q-1$.
 Write $F^*(X,Y,Z) = F(X,Y,Z) - F(0,0,0)(1 - X^{q-1})$.
 Then $F^*$ is homogeneous of degree $q-1$, as the constant coefficients of $F$ and $F(0,0,0)(1-X^{q-1})$ cancel out.
 If $\overline d \in \overline D$, then $F^*(1,\overline d,z) = c(1,\overline d,z) - 0 = 0$ for all $z$.
 Since the degree of $F^*(1,\overline d,Z)$ as polynomial of $Z$ is smaller than $q$, it must be the zero polynomial.
 Therefore, $Y-\overline d$ must divide $F^*(1,Y,Z)$.
 Since $F^*$ is homogeneous, this implies that $Y-\overline d X$ divides $F^*(X,Y,Z)$.
 The theorem follows.
\end{proof}

The above theorem is very valuable for the construction of odd codewords and proving non-existence.

\begin{ex}
 \label{Ex:OddOn4}
 In \Cref{Thm:ConcurrentCodeword}, let us choose $q = p > 2$ is prime, $D = \{-1,0,1\}$, $G = 2Z^2$, and $F(0,0,0) = 0$.
 Write $H(X,Y) = \prod_{\overline d \in \FF_q \setminus D} Y - \overline d X$.
 For general $q$ and $D$, using that the product of the elements of $\FF_q^*$ equals $-1$, it holds that
 \begin{align*}
  H(1,d) = \begin{cases}
   0 & \text{if } d \notin D, \\
   -\left( \prod_{d' \in D \setminus \{d\}} d - d' \right)^{-1} & \text{if } d \in D, 
  \end{cases} &&
   H(0,1) = 1.
 \end{align*}
 In our case, this means that
 \begin{align*}
  H(0,1) = H(1,0) = 1, && H(1,1) = H(1,-1) = -1/2.
 \end{align*}
 We find the odd codeword $c$ on 4 lines given by
 \[
  c(P) = \begin{cases}
   2 z^2 & \text{if $P = (1,0,z)$ or $P=(0,1,z)$}, \\
   - z^2 & \text{if $P = (1,1,z)$ or $P = (1,-1,z)$}, \\
   0 & \text{otherwise}.
  \end{cases}
 \]
\end{ex}

\begin{crl}
 \label{Crl:q/p+2}
 Consider $q = p^h$ with $p$ prime.
 Then $\mc(2,q)$ has no odd codewords on fewer than $\frac q p + 2$ lines.
\end{crl}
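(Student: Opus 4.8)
The plan is to reduce the statement to \Cref{Thm:MinModSpecialDir} via the codeword-to-multiset correspondence of \Cref{Prop:CodeToSet}. Suppose $c$ is an odd codeword on $n$ lines concurrent at a point $P$. Since both the code $\mc(2,q)$ and the property of being an odd codeword are preserved under the collineation group of $\pg(2,q)$, I would first apply a suitable element of $\PGL(3,q)$ to move $P$ to $(0,0,1)$ and the $n$ lines to the lines $Y = dX$ with $d$ ranging over a set $D \subseteq \fqi$ of size $n$ (the slopes are distinct since the lines are distinct and concurrent, so $|D| = n$). This puts $c$ exactly in the hypothesis of \Cref{Prop:CodeToSet}.

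Next, because $c \in \mc(2,q)$, by definition it admits some expression $c = \sum_\ell \alpha_\ell \chi_\ell$ as a linear combination of characteristic functions of lines. Feeding this explicit expression into \Cref{Prop:CodeToSet} produces a multiset $M$ of affine points whose set of mod-special directions is exactly $D$; in particular $M$ has precisely $n = |D|$ mod-special directions.

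Finally, since $c$ is an odd codeword on $n \geq 1$ lines, $M$ has at least one mod-special direction. \Cref{Thm:MinModSpecialDir} then forces $M$ to have at least $\frac qp + 2$ of them, whence $n \geq \frac qp + 2$, which is the claim.

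The argument is short because the real work is already packaged in \Cref{Thm:MinModSpecialDir} (which itself rests on the degree bound of \Cref{Thm:Pr} together with the fact that a projection function of a mod-special direction has degree at least $q/p$). The only point requiring care is the reduction to standard position: one must verify that an odd codeword maps to an odd codeword under a collineation carrying the center and the covering lines correctly, so that \Cref{Prop:CodeToSet} genuinely applies. A self-contained alternative that avoids the collineation would argue directly on $F_c$ via \Cref{Thm:ConcurrentCodeword}, normalising the center value to $0$ and identifying the non-constancy of $c$ on each line $Y=dX$ with the non-constancy of the corresponding restriction of the polynomial $G\prod_{\overline d \notin D}(Y-\overline d X)$; however, extracting the sharp threshold $\frac qp+2$ this way still needs the same degree estimate, so the multiset route is the cleaner path.
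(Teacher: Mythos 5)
Your argument is correct, but it takes a different formal route from the paper. The paper's proof works directly on the polynomial side: it invokes \Cref{Res:MinWt} to get $n \geq 2$, normalises coordinates, applies \Cref{Thm:ConcurrentCodeword} to write $F_c = G \cdot \prod_{\overline d \in \overline D}(Y - \overline d X) + \alpha(1 - X^{q-1})$, restricts to one covering line to obtain a non-constant $\FF_p$-valued polynomial $f(Z) = F_c(1,0,Z)$ of degree at most $n-2$, and concludes $n - 2 \geq \deg f \geq q/p$ from $f^p - f \equiv 0$. You instead push the codeword through \Cref{Prop:CodeToSet} to obtain a multiset $M$ whose set of mod-special directions is exactly the slope set $D$ with $|D| = n \geq 1$, and then quote \Cref{Thm:MinModSpecialDir}. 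Since that theorem rests on \Cref{Thm:Pr} and the same degree estimate $\deg \pr_{M,d} \geq q/p$ for a non-constant $\FF_p$-valued polynomial, the two proofs are translations of one another under the dictionary of \Cref{Sec:SetsToCode} --- exactly the phenomenon the paper points out in its conclusion. Your route buys brevity by outsourcing the work to the direction-theoretic machinery, at the cost of the (routine but necessary) normalisation step you correctly flag: one must check that a collineation carrying $P$ to $(0,0,1)$ turns $c$ into an odd codeword on lines of the form $Y = dX$, so that \Cref{Prop:CodeToSet} literally applies; note that none of these lines can be $\ell_\infty$ since they all pass through the affine point $(0,0,1)$. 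The paper's route is self-contained within the coding-theoretic section and avoids appealing to results about multisets, which is presumably why the authors chose it, but both are sound and rest on the identical degree bound.
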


\begin{proof}
 Let $c$ be an odd codeword whose support support is contained in the union of $n$ lines $\ell_1, \dots, \ell_n$ through a point $P$.
 By \Cref{Res:MinWt}, $n \geq 2$.
 We can choose coordinates such that $\ell_1$ has equation $Y=0$ and $\ell_2$ has equation $X=0$.
 By \Cref{Thm:ConcurrentCodeword},
 \[
 F_c(X,Y,Z) = \prod_{\overline d \in \overline D}(Y-\overline d X) G(X,Y,Z) + \alpha (1-X^{q-1}), 
 \]
 with $\alpha \in \FF_q$, $\overline D$ a set of size $q+1-n$ and $G$ a homogeneous polynomial of degree $n-2$.
 Since $c$ is not constant on the points of $\ell_1 \setminus \{P\}$, the polynomial $f(Z) = F_c(1,0,Z)$ is not constant.
 Note that $\deg f(Z) = \deg G(1,0,Z) \leq \deg G(X,Y,Z) = n-2$.
 On the other hand, $f(Z)$ only takes values in $\FF_p$, hence $f(Z)^p - f(Z)$ is identically zero.
 Since $f(Z)$ is not constant, this means that $\deg(f(Z)^p - f(Z)) = p \deg f(Z) \geq q$.
 We conclude that $n \geq \deg f(Z) + 2 \geq \frac q p + 2$.
\end{proof}

Note that $q/p+2$ is at least $3$.
Hence, the above corollary tells us that codewords whose support is contained in $2$ lines are linear combinations of these lines, which is easy to verify combinatorially.
Note however that in case $q=p$ this bound is tight, as illustrated by the odd codewords on 3 lines.

\bigskip

Next, we introduce the concept of the rank of a codeword and use it to prove that there are odd codewords on $n$ lines, which can't be written in a trivial way as sums of odd codewords on fewer than $n$ lines.

\begin{df}
Let $c$ be a codeword of $\mc(2,q)$ and $P$ a point of $\pg(2,q)$.
We define the \emph{$P$-rank} $\rk_P(c)$ of $c$ as the integer $r$ such that after a projective transformation that maps $P$ to $(0,0,1)$, the polynomial $F_c$ related to $c$ (as defined in the beginning of this section) has $Z$-degree $r$.
\end{df}

\begin{rmk}
 \label{Rmk:Rank}
\begin{enumerate}
 \item The $P$-rank of $c$ is well defined, since a projective transformation which fixes $(0, 0, 1)$ does not change the $Z$-degree of $F_c(X, Y, Z)$.
 \item \label{Rmk:Rank:Concurrent} If $\supp(c)$ is covered by $n$ concurrent lines $\ell_1, \dots, \ell_n$ through $P$, then after a coordinate transformation that maps $P$ to $(0,0,1)$ and $\ell_1$ to the line $X=0$, $\rk_P(c)$ equals the $Z$-degree of the polynomial $G$ as defined in \Cref{Thm:ConcurrentCodeword}.
 In particular, this means that $\rk_P(c) \leq n-2$ if $n\geq 2$.
 \item \label{Rmk:Rank:Sum} It follows immediately from the definition of the rank that for any two codewords $c_1$ and $c_2$, $\rk_P(c_1+c_2) \leq \max\{\rk_P(c_1), \rk_P(c_2)\}$.
 %This will be important later on.
\end{enumerate}
\end{rmk}

\begin{prop}
 \label{Crl:ConcurrentCodeword}
 Let $p$ be prime and consider $n \geq 1$ concurrent lines $\ell_1, \dots, \ell_n$ in $\pg(2,p)$ through the point $P$.
 Let $r$ be an integer with $0 \leq r \leq n-2$.
 Then the set
 \[
  C(P,\{\ell_1,\dots,\ell_n\},r) = \sett{c \in \mc(2,p)}{\supp(c) \subseteq \bigcup_{i=1}^n \ell_i,\, \rk_P(c) \leq r}
 \]
 is a subspace of $\mc(2,p)$ of dimension $(r+1)\left(n-1-\frac r2\right)+1$.
 In particular, the subspace of all codewords whose support is covered by $\ell_1, \dots, \ell_n$ has dimension $\binom n2 + 1$.
\end{prop}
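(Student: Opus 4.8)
The plan is to pass to the polynomial picture of \Cref{Thm:ConcurrentCodeword} and turn the whole statement into a monomial count. First I would fix coordinates so that $P = (0,0,1)$ and $\ell_1$ is the line $X = 0$; then the remaining lines are $Y = dX$ for $d$ in a set $D \subseteq \FF_p$ with $|D| = n-1$, and I write $\overline D = \FF_p \setminus D$. By \Cref{Thm:ConcurrentCodeword}, the codewords $c$ whose support lies in $\bigcup_i \ell_i$ are exactly those with
\[
 F_c(X,Y,Z) = G(X,Y,Z) \prod_{\overline d \in \overline D}(Y - \overline d X) + \alpha(1 - X^{p-1}),
\]
where $\alpha = F_c(0,0,0) \in \FF_p$ and $G$ is homogeneous of degree $n-2$ or zero. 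The key simplification in the prime case is that the hypothesis ``$F_c$ only takes values in $\FF_p$'' of \Cref{Thm:ConcurrentCodeword} is automatic here, because $F_c \in \FF_p[X,Y,Z]$ already sends $\FF_p$-points into $\FF_p$. Hence $G$ ranges freely over all homogeneous polynomials of degree $n-2$ over $\FF_p$ (together with $G=0$).

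Next I would argue that $c \mapsto (G,\alpha)$ is an $\FF_p$-linear bijection onto the space of such pairs. Linearity holds because $c \mapsto F_c$ is linear (the three defining properties recalled before \Cref{Res:DGM} are preserved under $\FF_p$-combinations, and $F_c$ is the unique such polynomial) and because extracting $\alpha$ and $G$ are linear operations. Injectivity comes from uniqueness of the decomposition: the term $G\prod_{\overline d}(Y-\overline d X)$ vanishes at the origin in all cases (the product vanishes there when $\overline D \neq \emptyset$, and $G(0,0,0)=0$ when $\overline D = \emptyset$ since then $G$ has positive degree), so evaluating at the origin recovers $\alpha$, after which $G$ is the unique quotient of $F_c - \alpha(1-X^{p-1})$ by the nonzero polynomial $\prod_{\overline d}(Y-\overline d X)$; surjectivity is the ``if'' direction of \Cref{Thm:ConcurrentCodeword}. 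This already exhibits the support-covered codewords as a subspace. Moreover, by \Cref{Rmk:Rank:Concurrent}, $\rk_P(c)$ equals the $Z$-degree of $G$, so under the bijection $C(P,\{\ell_1,\dots,\ell_n\},r)$ corresponds precisely to the pairs $(G,\alpha)$ with $\deg_Z G \leq r$, which is a subspace; this proves the subspace claim.

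It then remains to compute the dimension. The parameter $\alpha$ contributes one dimension. A monomial $X^iY^jZ^k$ of $G$ satisfies $i+j+k = n-2$ together with $k \leq r$; for each fixed $k \in \{0,\dots,r\}$ there are $n-1-k$ admissible pairs $(i,j)$ (using $r \leq n-2$ so that $i+j = n-2-k \geq 0$). Hence the space of admissible $G$ has dimension
\[
 \sum_{k=0}^{r}(n-1-k) = (r+1)(n-1) - \frac{r(r+1)}{2} = (r+1)\left(n-1-\tfrac r2\right),
\]
and adding the $\alpha$-dimension gives the claimed $(r+1)\left(n-1-\frac r2\right)+1$. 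For the final ``in particular'' statement, every codeword supported on the $n$ lines has $\rk_P(c) \leq n-2$ by \Cref{Rmk:Rank:Concurrent}, so that subspace equals $C(P,\{\ell_1,\dots,\ell_n\},n-2)$; substituting $r = n-2$ collapses the formula to $\binom n2 + 1$ (with $n=1$ being the trivial one-dimensional space spanned by $\chi_{\ell_1}$).

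The step I expect to demand the most care is the bijection in the second paragraph: verifying injectivity (the uniqueness of $(G,\alpha)$, including the degenerate case $\overline D = \emptyset$, i.e.\ $n = p+1$, where one recovers the whole code $\mc(2,p)$) and making explicit that the prime hypothesis removes any $\FF_p$-value constraint on $G$. Once $G$ is seen to range over the full space of homogeneous degree-$(n-2)$ polynomials, everything reduces to the elementary monomial count above.
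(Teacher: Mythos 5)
Your proof is correct and follows essentially the same route as the paper's: reduce via \Cref{Thm:ConcurrentCodeword} to the pair $(G,\alpha)$, observe that the $\FF_p$-valuedness condition is automatic over the prime field, identify $\rk_P(c)\leq r$ with $\deg_Z G\leq r$ via \Cref{Rmk:Rank}, and count monomials. Your additional verification of the linearity and injectivity of $c\mapsto(G,\alpha)$ (including the case $\overline D=\emptyset$) is detail the paper leaves implicit, and your monomial count $\sum_{k=0}^{r}(n-1-k)$ is the correct one yielding the stated dimension.
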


\begin{proof}
 Verifying that $C(P,\{\ell_1,\dots,\ell_n\},r)$ is a subspace is easy, since both the condition that $\supp(c)$ is a subset of a certain set and the condition $\rk_P(c) \leq r$ are preserved when taking linear combinations.
 
 We may suppose that the point of concurrency is $(0,0,1)$ and $\ell_n$ is the line $X=0$.
 Then there is a set $D = \{d_1, \dots, d_{n-1}\} \subseteq \FF_p$ such that $\ell_i$ has equation $Y = d_i X$ for $i < n$.
 Now we can apply \Cref{Thm:ConcurrentCodeword}.
 The condition that $F$ only takes values in $\FF_p$ holds trivially.
 Therefore, we are free to choose the coefficient $F(0,0,0)$ and a homogeneous polynomial $G(X,Y,Z)$ of degree $n-2$ which has $Z$-degree at most $r$.
 Such polynomials $G$ form a subspace of dimension
 \[
  \sum_{i=0}^r n-2-i = (r+1)\left( n-1 - \frac r2 \right).
 \]
 The last part of the theorem follows from the fact that each codeword whose support is covered by the lines $\ell_1, \dots, \ell_n$ has rank at most $n-2$ by \Cref{Rmk:Rank} (\ref{Rmk:Rank:Concurrent}).
\end{proof}

\begin{prop}
 \label{Prop:ReducibleOddCodeword}
 Let $c$ be a codeword of $\mc(2,p)$ whose support is covered by the $n$ concurrent lines through the common point $P$.
 Then $\rk_P(c) \leq r$ if and only if $c$ is a linear combination of codewords whose support is covered by $r+2$ lines through $P$.
\end{prop}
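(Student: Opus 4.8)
The proposition is an equivalence whose two implications are of very different difficulty, so I would treat them separately. The \emph{if} direction is immediate from the rank properties recorded in \Cref{Rmk:Rank}: if $c = \sum_k c_k$ with each $c_k$ supported on $r+2$ concurrent lines through $P$, then $\rk_P(c_k) \leq (r+2)-2 = r$ by \Cref{Rmk:Rank}~(\ref{Rmk:Rank:Concurrent}), and applying \Cref{Rmk:Rank}~(\ref{Rmk:Rank:Sum}) inductively over the summands gives $\rk_P(c) \leq \max_k \rk_P(c_k) \leq r$. All of the content therefore lies in the \emph{only if} direction, where I would spend the effort.

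For the converse, I would fix coordinates so that $P = (0,0,1)$ and one of the $n$ lines is $X = 0$, writing the remaining lines as $Y = dX$ for $d$ in a set $D$ with $|D| = n-1$. By \Cref{Thm:ConcurrentCodeword} the polynomial attached to $c$ has the form
\[
 F_c = G \cdot \!\!\prod_{\overline d \in \FF_p \setminus D}\!\! (Y - \overline d X) + \alpha\,(1 - X^{p-1}),
\]
with $\alpha = F_c(0,0,0)$ and $G$ homogeneous of degree $n-2$; by \Cref{Rmk:Rank}~(\ref{Rmk:Rank:Concurrent}) the hypothesis $\rk_P(c) \leq r$ says precisely that $\deg_Z G \leq r$. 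The plan is to decompose
\[
 G = \sum_{\substack{D' \subseteq D \\ |D'| = r+1}} G'_{D'} \prod_{d \in D \setminus D'} (Y - dX),
\]
with each $G'_{D'}$ homogeneous of degree $r$. Such a decomposition is exactly what is needed: since $|D'| = r+1$, feeding $G'_{D'}$ (of the required degree $|D'|-1 = r$) back into \Cref{Thm:ConcurrentCodeword} yields a codeword $c_{D'}$ supported on $X=0$ together with the $r+1$ lines $Y = dX$, $d \in D'$, that is, on $r+2$ lines through $P$. Using $\prod_{\overline d \notin D'}(Y-\overline d X) = \big(\prod_{\overline d \notin D}(Y-\overline d X)\big)\cdot\prod_{d \in D\setminus D'}(Y-dX)$, the homogeneous degree-$(p-1)$ parts of the $F_{c_{D'}}$ sum to $G\cdot\prod_{\overline d \notin D}(Y-\overline dX)$, and distributing the constant $\alpha$ arbitrarily among the summands (say, all into one) matches the $(1-X^{p-1})$ term. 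Because $c \mapsto F_c$ is linear and injective, $\sum_{D'} c_{D'} = c$, which is the desired expression.

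The crux, and the only genuinely nontrivial step, is producing the decomposition of $G$. Since $\deg_Z G \leq r$, I would expand $G = \sum_{j=0}^{r} Z^j G_j(X,Y)$ with $G_j$ a binary form of degree $n-2-j$, and seek $G'_{D'} = \sum_{j} Z^j H_{D',j}$ with $H_{D',j}$ a binary form of degree $r-j$; matching powers of $Z$ reduces the problem to the purely two-variable statement that each $G_j$ lies in the span of the products $H \cdot \prod_{d\in D\setminus D'}(Y-dX)$. As $D'$ ranges over $(r+1)$-subsets of $D$, the complements $D\setminus D'$ range over all $(n-r-2)$-subsets, so it suffices to show that products of $(n-r-2)$-subsets of the $n-1$ distinct linear forms $\{Y - dX : d \in D\}$ span the whole $(n-r-1)$-dimensional space of binary forms of degree $n-r-2$; multiplication by arbitrary forms of degree $r-j$ then fills out degree $n-2-j$. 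This spanning statement is where I expect the real work, and I would prove it by the standard evaluation trick: fixing an $(n-r-1)$-subset of the linear forms and omitting one factor at a time produces $n-r-1$ products which are linearly independent, since evaluating at a ratio $[X:Y]$ that kills exactly one form annihilates all but one product. These therefore already span a space of the right dimension. The inequality $n-1 \geq (n-r-2)+1$, i.e.\ $r \geq 0$, guarantees that enough linear forms are available, and the trivial case $n \leq r+2$ (where $\rk_P(c)\leq r$ holds automatically and $c$ itself already sits on at most $r+2$ lines) is dealt with separately.
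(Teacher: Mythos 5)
Your proof is correct, but it takes a genuinely different route from the paper's. The two agree on the easy ``if'' half (via \Cref{Rmk:Rank}). For the ``only if'' half, the paper avoids any explicit decomposition: using the dimension formula of \Cref{Crl:ConcurrentCodeword}, it applies Grassmann's identity to show that $C(P,\{\ell_1,\dots,\ell_n\},r)$ is spanned by the two subspaces obtained by dropping $\ell_2$ resp.\ $\ell_1$, and iterates to conclude that this space is spanned by the $n-r-1$ subspaces $C(P,\{\ell_j,\ell_{n-r},\dots,\ell_n\},r)$. You instead construct the decomposition explicitly at the level of the polynomials of \Cref{Thm:ConcurrentCodeword}: you reduce to writing the degree-$(n-2)$ form $G$ as a combination of products $G'_{D'}\prod_{d\in D\setminus D'}(Y-dX)$, and prove the required spanning statement for products of $(n-r-2)$-subsets of distinct linear forms by a Lagrange-interpolation-style evaluation argument. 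The two proofs are in fact closely aligned: your independent family (omit one factor at a time from a fixed $(n-r-1)$-subset) corresponds exactly to the paper's $n-r-1$ spanning subspaces. What yours buys is constructivity --- it produces the actual summands, which fits the explicit computations of \Cref{Sec:LinComb} --- at the cost of more bookkeeping; the paper's dimension count is shorter once \Cref{Crl:ConcurrentCodeword} is in hand. One small point worth making explicit in your write-up: \Cref{Thm:ConcurrentCodeword} requires the candidate polynomials to take values in $\FF_p$, and for your summands $F_{c_{D'}}$ this is automatic only because $q=p$ here --- the same reason the proposition is stated for $\mc(2,p)$ rather than $\mc(2,q)$.
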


\begin{proof}
 First suppose that $c$ is a linear combination of codewords whose support is covered $r+2$ lines through $P$.
 Then each such codeword has $P$-rank at most $r$ by \Cref{Rmk:Rank} (\ref{Rmk:Rank:Concurrent}).
 Hence, $c$ has $P$-rank at most $r$ by \Cref{Rmk:Rank} (\ref{Rmk:Rank:Sum}).

 Conversely, suppose that $\rk_P(c) \leq r$.
 Let $\ell_1, \dots, \ell_n$ be the lines through $P$ that cover $\supp(c)$.
 If $r=n-2$, the statement of the proposition is trivial.
 If $r<n-2$, then using the notation of \Cref{Crl:ConcurrentCodeword}, define $C = C(P,\{\ell_1,\dots,\ell_n\},r)$ and for $j=1,2$ define $C_j = C(P,\{\ell_j,\ell_3,\dots,\ell_n\},r)$.
 We will prove that the subspace $\vspan{C_1,C_2}$ spanned by $C_1$ and $C_2$ equals $C$.
 As we already observed, $\vspan{C_1,C_2} \leq C$, hence it suffices to prove that both spaces have the same dimension.
 Note that $C_1 \cap C_2 = C(P,\{\ell_3,\dots,\ell_n\},r)$.
 Hence, by Grassmann's identity and \Cref{Crl:ConcurrentCodeword},
 \begin{align*}
  \dim(\vspan{C_1,C_2}) 
   &= \dim C_1 + \dim C_2 - \dim(C_1 \cap C_2) \\
   &= 2\left( (r+1)\left(n-2-\frac r2\right) + 1 \right) - \left( (r+1)\left(n-3-\frac r2\right) + 1 \right) \\
   &= (r+1)\left(n-1-\frac r2\right) + 1 = \dim C.
 \end{align*}
 By inductively applying this argument, we obtain that $C$ is spanned by the subspaces $C(P,\{\ell_j,\ell_{n-r},\dots,\ell_n\},r)$ for $j=1,\dots,n-r-1$.
 Since $c \in C$, this implies that $c$ can be written as a sum of codewords of $P$-rank at most $r$, whose support is covered by a subset of $\ell_1, \dots, \ell_n$ of size $r+2$.
\end{proof}

\begin{crl}
\label{odd codewords of maximal rank}
 Let $p$ be prime and choose an integer $n$ with $3 \leq n \leq p+1$.
 Choose $n$ concurrent lines $\ell_1, \dots, \ell_n$ with common point $P$.
 Then $\mc(2,p)$ contains odd codewords on the $n$ lines $\ell_1, \dots, \ell_n$ which cannot be written as linear combinations of codewords whose support is covered by fewer than $n$ lines through $P$.
\end{crl}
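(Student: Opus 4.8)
The plan is to reduce the statement to the construction of a single odd codeword on $\ell_1,\dots,\ell_n$ of maximal $P$-rank $n-2$, and then to exhibit such a codeword explicitly via \Cref{Thm:ConcurrentCodeword}. First I would observe that by \Cref{Rmk:Rank} (\ref{Rmk:Rank:Concurrent}), any codeword whose support is covered by at most $n-1$ lines through $P$ has $P$-rank at most $n-3$, and by \Cref{Rmk:Rank} (\ref{Rmk:Rank:Sum}) the $P$-rank does not increase under linear combinations. Hence any linear combination of codewords supported on fewer than $n$ lines through $P$ has $P$-rank at most $n-3$. Consequently, it suffices to produce an odd codeword $c$ on $\ell_1,\dots,\ell_n$ with $\rk_P(c) = n-2$, since such a $c$ cannot be of the excluded form. (Equivalently, one may invoke \Cref{Prop:ReducibleOddCodeword} with $r = n-3$.)

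Next I would fix coordinates so that $P = (0,0,1)$, the line $\ell_n$ has equation $X = 0$, and $\ell_i$ has equation $Y = d_i X$ for $i < n$, where $D = \{d_1,\dots,d_{n-1}\} \subseteq \FF_p$. Writing $\overline D = \FF_p \setminus D$, I would apply \Cref{Thm:ConcurrentCodeword} with the homogeneous polynomial $G = Z^{n-2}$ of degree $n-2 = |D|-1$ and with $F(0,0,0) = 0$, yielding
\[
 F_c(X,Y,Z) = Z^{n-2} \prod_{\overline d \in \overline D}(Y - \overline d X).
\]
Since $q = p$, the condition that $F_c$ take values in $\FF_p$ is automatic, and the degree in each variable is at most $p-1$ precisely because $3 \le n \le p+1$. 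Thus \Cref{Thm:ConcurrentCodeword} guarantees that $F_c$ corresponds to a codeword $c \in \mc(2,p)$ with $\supp(c) \subseteq \ell_1 \cup \dots \cup \ell_n$, and by \Cref{Rmk:Rank} (\ref{Rmk:Rank:Concurrent}) the $Z$-degree of $G$ gives $\rk_P(c) = n-2$, which is maximal.

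Finally, I would verify that $c$ is an odd codeword by checking condition (2) of \Cref{Df:OddCodeword} on each line. Evaluating $F_c$ on $\ell_n \setminus \{P\}$ gives $F_c(0,1,z) = z^{n-2}$, while on $\ell_i \setminus \{P\}$ it gives $F_c(1,d_i,z) = z^{n-2}\prod_{\overline d \in \overline D}(d_i - \overline d)$, where the product is nonzero because $d_i \in D$. As $1 \le n-2 \le p-1$, the map $z \mapsto z^{n-2}$ is not constant on $\FF_p$, so $c$ is nonconstant on each $\ell_i \setminus \{P\}$, confirming that $c$ is odd on all $n$ lines. I do not expect a genuine obstacle here; the only points requiring care are confirming the degree bounds forced by $n \le p+1$ (so that $G = Z^{n-2}$ is an admissible choice and $F_c$ has degree at most $p-1$ in each variable) and the boundary case $n = p+1$, where $\overline D = \emptyset$, the product is empty, and $F_c = Z^{p-1}$, for which $z \mapsto z^{p-1}$ is still nonconstant.
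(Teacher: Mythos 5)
Your proposal is correct and follows essentially the same route as the paper: the paper's proof likewise constructs an odd codeword of maximal $P$-rank $n-2$ via \Cref{Thm:ConcurrentCodeword} and concludes via \Cref{Prop:ReducibleOddCodeword}. Your version merely makes the choice $G = Z^{n-2}$ and the verification of oddness explicit, which the paper leaves implicit.
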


\begin{proof}
 Using \Cref{Thm:ConcurrentCodeword}, we can construct odd codewords $c$ on $\ell_1, \dots, \ell_n$ with $P$-rank $n-2$.
 Such a codeword $c$ satisfies the requirement of the corollary by \Cref{Prop:ReducibleOddCodeword}.
\end{proof}

\subsection{A consequence regarding multisets}

Using the link between codewords and multisets described in \Cref{Sec:SetsToCode}, one may translate \Cref{Prop:ReducibleOddCodeword} and \Cref{odd codewords of maximal rank} into the following corollary on multisets in $\ag(2,p)$.

\begin{crl}
 \label{Crl:MultisetNotUnion}
 Let $p$ be prime.
 Choose a subset $D$ of $\FF_p \cup \{\infty\}$ of size $n \geq 3$.
 Then there exists a multiset of points in $\ag(2,p)$ whose set of mod-special directions is $D$ and which is not a union of multisets with fewer than $n$ mod-special directions.
%For any \sam{}$n < 2$, there exists a multiset in $\ag(2, p)$ with $n$ mod-special directions such that it is not the union of multisets with fewer than $n$ mod-special directions. 
Furthermore, any multiset with $n$ mod-special directions, whose projection function in any direction has degree at most $r < n-2$, is necessarily the union of multisets with at most $r+2$ mod-special directions.    
\end{crl}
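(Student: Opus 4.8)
The plan is to route everything through the dictionary of \Cref{Sec:SetsToCode}, so that the two assertions become instances of \Cref{odd codewords of maximal rank} and \Cref{Prop:ReducibleOddCodeword} respectively. First I set up the precise correspondence. By \Cref{Prop:SetToCode}, a multiset $M$ with mod-special direction set $D$ yields an odd codeword $c_M$ whose support is covered by the $|D|$ lines through $P=(0,0,1)$ indexed by $D$, and \Cref{Eq:SetToCode} gives $c_M(d,-1,b)=\pr_{M,d}(b)$ for $d\in D$. The quantitative heart of the dictionary is the identity
\[
 \rk_P(c_M)=\max_{d}\deg\pr_{M,d}.
\]
The inequality $\deg\pr_{M,d}\le\rk_P(c_M)$ is immediate, since $\pr_{M,d}(b)=F_{c_M}(d,-1,b)$ has degree in $b$ bounded by the $Z$-degree of $F_{c_M}$. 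For the reverse inequality I would write $F_{c_M}$ in the form of \Cref{Thm:ConcurrentCodeword}, so that by \Cref{Rmk:Rank} (\ref{Rmk:Rank:Concurrent}) the rank equals the $Z$-degree of $G$; since $G$ is homogeneous of degree $|D|-1$, its top $Z$-homogeneous part, dehomogenised in the affine coordinate, is a nonzero polynomial of degree strictly less than $|D|$ and so cannot vanish at every special direction, whence the top $Z$-power of $F_{c_M}$ survives along some special line. This witnesses $\rk_P(c_M)\le\max_d\deg\pr_{M,d}$.

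Next comes the passage between \emph{unions of multisets} and \emph{sums of codewords}. The affine codeword $a_M=\sum_{(x,y)\in M}\chi_{[x,y,1]}$ is additive under multiset union, and $c_M$ differs from $a_M$ only by a linear combination of lines through $P$. Hence a decomposition $M=\bigcup_j M_j$ into multisets with at most $s$ mod-special directions gives $c_M=\sum_j c_{M_j}+(\text{combination of lines through }P)$, a sum of codewords each covered by at most $s$ lines through $P$; by \Cref{Rmk:Rank} (\ref{Rmk:Rank:Concurrent}) and (\ref{Rmk:Rank:Sum}) such a sum has $P$-rank at most $s-2$. Conversely, given $c_M=\sum_j c'_j$ with each $c'_j$ covered by at most $s$ lines through $P$, I realise each $c'_j$ as an explicit linear combination of lines and feed it to \Cref{Prop:CodeToSet}; the computation in that proof shows the resulting multiset $M_j$ has its mod-special directions among the $\le s$ directions carrying the support of $c'_j$.

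With this dictionary the proof assembles quickly. For the first statement, \Cref{odd codewords of maximal rank} supplies, for the $n$ lines through $P$ indexed by $D$, an odd codeword $c$ of $P$-rank $n-2$ that is not a linear combination of codewords covered by fewer than $n$ lines through $P$; feeding an explicit linear combination of $c$ to \Cref{Prop:CodeToSet} produces a multiset $M$ with mod-special direction set exactly $D$, and $c_M$ agrees with $c$ up to lines through $P$. Were $M$ a union of multisets with fewer than $n$ mod-special directions, the forward passage above would exhibit $c$ as a linear combination of codewords covered by fewer than $n$ lines through $P$, contradicting its defining property. For the second statement, the hypothesis $\deg\pr_{M,d}\le r<n-2$ together with the rank identity gives $\rk_P(c_M)\le r$, so \Cref{Prop:ReducibleOddCodeword} writes $c_M$ as a sum of codewords covered by $r+2$ lines through $P$, and the converse passage exhibits $M$ as a union of multisets with at most $r+2$ mod-special directions.

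The step I expect to be delicate is the converse passage, because recovering a multiset from a codeword via \Cref{Prop:CodeToSet} is not canonical and only controls multiplicities modulo $p$, whereas a genuine multiset union must reproduce the integer multiplicities of $M$ exactly. I would handle this by first splitting off the ``$p$-divisible part'': writing each multiplicity as $\mu_0+p\mu_1$ with $0\le\mu_0<p$ gives $M=M_0\cup pK$, where $pK$ is a union of blocks of $p$ equal points, each block being mod-equidistributed from every direction and hence of $0$ mod-special directions, while $M_0$ shares the codeword, the mod-special directions and the projection degrees of $M$. It then suffices to decompose $M_0$, whose multiplicities lie in $\{0,\dots,p-1\}$; here I would exploit that the affine points are partitioned by the lines through $P$ and that the decomposition of \Cref{Prop:ReducibleOddCodeword} reuses only $r+1$ common lines, allocating the multiplicity carried by each non-shared line to a single piece and splitting the bounded multiplicities on the shared lines so that the chosen explicit linear combinations add up to the multiplicities of $M_0$ without overflow. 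Making this bookkeeping airtight --- ensuring that no spurious $p$-fold mass is introduced --- is the real content beyond the formal translation.
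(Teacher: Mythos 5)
Your overall route is exactly the paper's: the paper justifies this corollary with the single sentence that one may translate \Cref{Prop:ReducibleOddCodeword} and \Cref{odd codewords of maximal rank} through the dictionary of \Cref{Sec:SetsToCode}, and you carry out that translation. Your treatment of the first assertion is complete and correct: it needs only the ``easy'' direction (multiset union $\Rightarrow$ codeword sum, hence an upper bound on $\rk_P$), together with \Cref{Prop:CodeToSet} to manufacture $M$ from a rank-$(n-2)$ odd codeword, and your observation that $c_M$ and $c$ have the same $P$-rank because they differ by a combination of lines through $P$ (each of rank $0$) is the right one. The identity $\rk_P(c_M)=\max_d\deg\pr_{M,d}$, proved via the top $Z$-coefficient of $G$ in \Cref{Thm:ConcurrentCodeword}, is also correct and is a worthwhile explicit formulation of the dictionary.

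The gap is in the second assertion, at the step you yourself flag as delicate, and your proposed repair does not close it. After \Cref{Prop:ReducibleOddCodeword} gives $c_M=\sum_j c'_j$, you must choose for each $c'_j$ an explicit linear combination $\sum_\ell\alpha^{(j)}_\ell\chi_\ell$ so that $\sum_j\nu\bigl(\alpha^{(j)}_{[a,b,1]}\bigr)$ equals $\mu_M(a,b)$ exactly, not merely modulo $p$, at every affine point. Peeling off the $p$-divisible part of $M$ reduces the target to $\nu(\overline\mu_M(a,b))\le p-1$, but then any carrying --- any point where $\sum_j\nu\bigl(\alpha^{(j)}_{[a,b,1]}\bigr)\ge p$ --- produces a union strictly larger than $M$, and multiset difference is not union. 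Your bookkeeping appeals to ``the affine points are partitioned by the lines through $P$'', but this misreads the geometry: the multiplicity of the affine point $(a,b)$ is the coefficient of the line $[a,b,1]$, which does \emph{not} pass through $P=(0,0,1)$ and which meets every one of the $n$ concurrent lines in exactly one point, so there is no partition that localises a point's multiplicity to a single piece $c'_j$. What is actually needed is a simultaneous choice of representatives, across all pieces and all $p^2$ points, with no carrying and with $\alpha^{(j)}_{[a,b,1]}=0$ wherever $\mu_M(a,b)=0$; since the available freedom is essentially the left kernel of the incidence matrix (dimension roughly $p^2/2$) while the support constraint alone can impose close to $p^2$ independent linear conditions when $M$ is small, a dimension count does not suffice and a genuinely new argument is required. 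To be fair, the paper itself offers no more detail than ``one may translate'', so this difficulty is inherited rather than introduced; but your proof as written does not establish the second assertion.
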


\section{Small weight codewords of \texorpdfstring{$\boldsymbol{\mathcal C(2,p)}$}{C(2,p)}}
 \label{Sec:Small}

The goal of this section is to prove \Cref{Thm:SmallWeight}, which is stated again below.

\bigskip

\noindent {\bf \Cref{Thm:SmallWeight}.} {\it
Suppose that $p \geq 37$ is prime and that $c \in \mc(2,p)$ with
\[
 \wt(c) \leq \begin{cases}
  4p+3 & \text{if } p \leq 47, \\
  5p-36 & \text{if } p \geq 53.
 \end{cases}
\]
 Then either
 \begin{enumerate}
  \item $c$ is a linear combination of at most 4 lines,
  \item $c$ is a linear combination of an odd codeword on 3 lines and another line,
  \item or $c$ is an odd codeword on 4 lines.
 \end{enumerate}
}

We will use the following results by the two last authors of this paper.

\begin{res}[{\cite[Theorem 4.2, Corollary 4.9]{Szonyi:Weiner:14}}]
\label{SzW}
Suppose that $q \geq 19$ and $c \in \mc(2,q)$.
 \begin{enumerate}
  \item If $\wt(c) < \sqrt{\frac q2} (q+1)$, then the points of $\supp(c)$ can be covered by $\ceil{\frac{\wt(c)}{q+1}}$ lines.
  \item For any integer $k$ with $0 \leq k < \sqrt{\frac q2} - 1$, either $\wt(c) \leq k q + 1$ or $\wt(c) \geq (k+1)q - \frac{3k^2 + 5k + 2}2$.
 \end{enumerate}
\end{res}

In addition, we will apply a simple, but very useful lemma.
Let $\mp$ be the set of points of $\pg(2,q)$ and let $c_1, c_2$ be two functions $\mp \to \FF_p$.
Define the dot product of $c_1$ and $c_2$ as
\[
 c_1 \cdot c_2 = \sum_{P \in \mp} c_1(P) c_2(P).
\]
Note in particular that for a line $\ell$, $c_1 \cdot \chi_\ell = \sum_{P \in \ell} c(P)$.

Denote the constant function taking the value 1 everywhere as $\one$.

\begin{lm}[{\cite[Lemma 2]{LavrauwStormeVandeVoorde:08:I}}]
 Suppose $c \in \mc(2,q)$.
 Then for every line $\ell$, $c \cdot \chi_\ell = c \cdot \one$.
\end{lm}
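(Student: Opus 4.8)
The plan is to prove the stated identity $c \cdot \chi_\ell = c \cdot \one$ directly from the definition of $\mc(2,q)$ as the $\FF_p$-span of the characteristic functions of lines. Since both sides of the identity are $\FF_p$-linear in $c$, and since $c$ is by definition a linear combination of characteristic functions $\chi_m$ of lines $m$, it suffices to verify the identity for $c = \chi_m$ for an arbitrary line $m$. Thus the whole statement reduces to checking a combinatorial fact about two lines.

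First I would spell out the two quantities for $c = \chi_m$. On the one hand, $\chi_m \cdot \one = \sum_{P \in \mp} \chi_m(P) = |m|$, the number of points on the line $m$, which equals $q+1$ in $\pg(2,q)$, so $\chi_m \cdot \one \equiv q + 1 \equiv 1 \pmod p$. On the other hand, $\chi_m \cdot \chi_\ell = \sum_{P \in \mp} \chi_m(P) \chi_\ell(P) = |m \cap \ell|$, the number of common points of $m$ and $\ell$. In $\pg(2,q)$ any two lines meet in exactly one point, whether they are equal (in which case $|m \cap \ell| = q+1 \equiv 1$) or distinct (in which case $|m \cap \ell| = 1$). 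In all cases $\chi_m \cdot \chi_\ell \equiv 1 \equiv \chi_m \cdot \one \pmod p$, so the identity holds for every generator $\chi_m$.

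To finish, I would invoke linearity. Writing $c = \sum_i \lambda_i \chi_{m_i}$ with $\lambda_i \in \FF_p$, both maps $c \mapsto c \cdot \chi_\ell$ and $c \mapsto c \cdot \one$ are $\FF_p$-linear in $c$, since the dot product is bilinear over $\FF_p$. Hence
\[
 c \cdot \chi_\ell = \sum_i \lambda_i (\chi_{m_i} \cdot \chi_\ell) = \sum_i \lambda_i (\chi_{m_i} \cdot \one) = c \cdot \one,
\]
which is exactly the claim. The only subtlety worth stating carefully is that this argument uses the \emph{defining} description of $\mc(2,q)$ as a span of line characteristic functions, rather than the polynomial characterisation of \Cref{Res:DGM}; one must be sure that every codeword really is such a linear combination, which is immediate from the definition.

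The proof is essentially routine once the reduction to generators is made, so there is no serious obstacle. The one point that requires the geometry of $\pg(2,q)$ — as opposed to a general incidence structure — is the fact that any two lines meet in exactly one point, together with $q + 1 \equiv 1 \pmod p$; these two facts are precisely what force both $|m \cap \ell|$ and $|m|$ to be congruent to $1$ modulo $p$. I would make sure the write-up flags that both sides land in $\FF_p$ and that all congruences are read modulo $p$, since the characteristic functions take values in $\FF_p$ and the dot product is computed there.
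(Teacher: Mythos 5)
Your proof is correct. The paper does not prove this lemma itself but simply cites \cite{LavrauwStormeVandeVoorde:08:I}, and your argument --- reducing to the generators $\chi_m$ by bilinearity of the dot product and then observing that $\chi_m \cdot \chi_\ell = |m \cap \ell| \equiv 1 \equiv q+1 = \chi_m \cdot \one \pmod p$, since two lines of $\pg(2,q)$ meet in one point and $q+1 \equiv 1 \pmod p$ --- is exactly the standard proof given in that reference.
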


Now assume that $p$ and $c$ meet the conditions of \Cref{Thm:SmallWeight}.
By \Cref{SzW}, the points of $\supp(c)$ can be covered by 4 lines $\ell_1, \dots, \ell_4$ (it is possible that not all of these lines are necessary).
We distinguish 3 cases, and prove that in each of these cases, $c$ must be of one of the 3 types described in \Cref{Thm:SmallWeight}.

\paragraph{Case 1: $\ell_1, \dots, \ell_4$ are concurrent.}
Then it follows directly from \Cref{Thm:ConcurrentCodeword} and the definition of odd codewords that $c$ is either a linear combination of $\ell_1, \dots, \ell_4$, an odd codeword on these 4 lines, or a linear combination of an odd codeword on 3 lines and another concurrent line.

\paragraph{Case 2: Three of the lines are concurrent.}

Suppose that the point $P$ lies on 3 of the lines covering $\supp(c)$, w.l.o.g.\ $\ell_1,\ell_2,\ell_3$.
Let $P_i$ denote $\ell_i \cap \ell_4$.
Take a point $Q \neq P_i$ on $\ell_4$.
Then 
\[
 c(Q) = c \cdot \chi_{\vspan{P,Q}} - c(P) = c \cdot \one - c(P).
\]
In particular, $c(Q)$ is the same for all these points $Q$.
This means that $c - (c \cdot \one - c(P)) \chi_{\ell_4}$ is a codeword of $\mc(2,p)$ whose support is contained in 3 concurrent lines, and hence is either a linear combination of these 3 lines or an odd codeword on these 3 lines.

\paragraph{Case 3: The lines are in general position.}

We will prove that $c$ is a linear combination of the lines $\ell_1,\dots, \ell_4$.
We may suppose that $\ell_1$ has equation $X=0$, $\ell_2$ has equation $Y=0$, $\ell_3$ has equation $Z=0$, and $\ell_4$ has equation $X+Y+Z=0$.
Denote the points corresponding to the standard basis vectors $(1,0,0), (0,1,0), (0,0,1)$ as $E_1,E_2,E_3$.
Now consider the system of linear equations
\[
 \begin{cases}
  \alpha_1 + \alpha_2 = c(E_3) \\
  \alpha_2 + \alpha_3 = c(E_1) \\
  \alpha_3 + \alpha_1 = c(E_2) \\
  \alpha_1 + \alpha_2 + \alpha_3 + \alpha_4 = c \cdot \one
 \end{cases}
\]
in the variables $\alpha_i$.
In odd characteristic, this system has a unique solution.
Now consider $v = c - \sum_i \alpha_i \chi_{\ell_i} \in \mc(2,p)$.
Then $v(E_1) = v(E_2) = v(E_3) = v \cdot \one = 0$.
Consider the points $Q_i = \ell_i \cap \ell_4$ for $i = 1,2,3$.
Then also $v(Q_i) = v \cdot \chi_{\vspan{Q_i,E_i}} = 0$.

If $\pi \in \Sym(3)$ is a permutation, then $\pi$ acts on $\FF_p^3$ by sending the vector $x = (x_0,x_1,x_2)$ to $x^\pi = \left( x_{\pi(0)},x_{\pi(1)},x_{\pi(2)} \right)$.
Therefore, we can interpret $\pi$ as an element of $\PGL(2,p)$.

\begin{lm}
 \label{Lm:Permutation}
 If $0 \in \{x,y,z\}$, then $v(x,y,z) = \sgn(\pi) \, v((x,y,z)^\pi)$.
\end{lm}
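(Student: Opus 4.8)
The plan is to exploit the two families of linear relations that the codeword $v$ satisfies — namely $v \cdot \chi_\ell = v \cdot \one = 0$ for every line $\ell$ — together with the facts that $v$ vanishes at the six points $E_1,E_2,E_3,Q_1,Q_2,Q_3$ and that $\supp(v)$ is covered by the four lines. First I would reduce the statement to transpositions: since $\Sym(3)$ is generated by transpositions, $\sgn$ is multiplicative, and any permutation sends a point with a zero coordinate to another such point, it suffices to prove $v(P) = -v(P^\tau)$ for every transposition $\tau$ and every $P$ on $\ell_1 \cup \ell_2 \cup \ell_3$; the general case then follows by composing these relations and tracking signs.

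Two kinds of lines produce the relations I need. A line through a unit point $E_i$ meets the configuration in $E_i$ (where $v=0$), in one point of some $\ell_j$ with $j \le 3$, and in one point of $\ell_4$; summing $v$ along it shows that for each point $(P,Q,R)$ of $\ell_4$ the three coordinate projections $(0,Q,R),(P,0,R),(P,Q,0)$ all carry the value $-v(P,Q,R)$. Call this relation $(T)$. A line through a quadrangle point $Q_i = \ell_i \cap \ell_4$ — which is fixed by the transposition $\tau$ that swaps the other two lines — meets those two lines in a $\tau$-swapped pair $B, B^\tau$ and meets $\ell_i$ and $\ell_4$ only in $Q_i$; since $v(Q_i)=0$, summing gives $v(B) = -v(B^\tau)$. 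As $B$ ranges over the swapped line this already yields $v(P) = -v(P^\tau)$ whenever $P$ lies on one of the two lines interchanged by $\tau$.

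This leaves the diagonal case, where $P$ lies on the line fixed setwise by $\tau$ (for instance $\tau = (0\,1)$ and $P \in \ell_3$). Here no single $\tau$-invariant line helps, because such a line meets the fixed line in only one, necessarily $\tau$-fixed, point. The idea is to route through $\ell_4$: substituting the off-diagonal relation $v(0,Q,R) = -v(Q,0,R)$ into $(T)$ collapses to the anti-symmetry $v(P,Q,R) = -v(Q,P,R)$ on $\ell_4$ itself, and then applying $(T)$ once more transfers this anti-symmetry back to the fixed line $\ell_3$. By the $\Sym(3)$-symmetry of the whole configuration, the same three-step argument handles each transposition on each of the three lines, completing the reduction.

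The main obstacle is exactly this diagonal case: there is an apparent circularity between the anti-symmetry on a fixed line $\ell_j$ with $j \le 3$ and the anti-symmetry on $\ell_4$, since the two are equivalent under $(T)$. What breaks the circle is the quadrangle-point relation, which is genuinely new information tying $\ell_1$ and $\ell_2$ together without any reference to $\ell_3$ or $\ell_4$; once combined with $(T)$, all signs are pinned down. A secondary point to watch is degeneracy — lines through $Q_i$ that coincide with a configuration line, or that pass through a second special point — but in every such case the relation degenerates to an identity among values already known to vanish, so these cause no trouble.
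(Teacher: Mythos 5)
Your proof is correct, and its core coincides with the paper's: the anti-symmetry under a transposition $\tau$ comes from a line through the diagonal point $Q_i$ fixed by $\tau$, which meets the two swapped lines in a $\tau$-conjugate pair and meets everything else only in points where $v$ vanishes. The only divergence is your ``diagonal case'': to get $v(x,y,0)=-v(y,x,0)$ you detour through $\ell_4$ via the relation $(T)$ coming from lines through the $E_i$, whereas the paper skips this entirely --- it establishes the relation only for the two transpositions that move the zero coordinate and notes that these already generate $\Sym(3)$ (for instance $(x,y,0)\mapsto(0,y,x)\mapsto(y,0,x)\mapsto(y,x,0)$ is a product of three such transpositions, with sign $-1$ as required). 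Your extra step is valid --- the chain $-v(P,Q,R)=v(0,Q,R)=-v(Q,0,R)=v(Q,P,R)$ with $P=-Q-R$ checks out, as does the transfer back to $\ell_3$ --- but it buys nothing that the generation argument does not already give; its only advantage is that it proves every transposition relation pointwise, which makes your final composition step marginally more transparent.
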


\begin{proof}
 First suppose that $x=0$.
 If one of the cases $y=0$, $z=0$, or $y=-z$ holds, then both sides of the equation are zero, so the equation holds.
 Otherwise, take the line $\ell = \vspan{(0,y,z),Q_3}$.
 Then $\ell$ can only intersect $\supp(c)$ in $(0,y,z)$ and $(y,0,z)$.
 Therefore $v(0,y,z) = -v(y,0,z)$.
 
 Now return to the general case (where the element of $x,y,z$ equal to zero is not necessarily $x$).
 Using similar arguments as above, we see that $v(x,y,z) = \sgn(\pi) \, v((x,y,z)^\pi)$ holds whenever $\pi$ is a transposition where one of the transposed elements equals $0$.
 The general statement of the lemma follows since these transpositions generate $\Sym(3)$.
\end{proof}

\begin{lm}
 \label{Lm:Coord}
 There exists a function $f: \FF_p \to \FF_p$ such that the following properties hold for all $z \in \FF_p$:
 \begin{enumerate}
  \item \label{Lm:Coord:Perm} $v(0,1,z) = v(1,z,0) = v(z,0,1) = f(z)$.
  \item \label{Lm:Coord:Inv} If $z \neq 0$, then $f(z^{-1}) = - f(z)$.
  \item \label{Lm:Coord:Roots} $f(0) = f(1) = f(-1) = 0$.
  \item \label{Lm:Coord:L4} $v(1,-z-1,z) = f(z)$.
  \item \label{Lm:Coord:Plus} $f(z) = -f(-z-1)$.
 \end{enumerate}
\end{lm}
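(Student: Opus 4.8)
The plan is to set $f(z) := v(0,1,z)$ and verify the five properties in turn, using throughout two facts. First, $\supp(v) \subseteq \ell_1 \cup \ell_2 \cup \ell_3 \cup \ell_4$, since $v = c - \sum_i \alpha_i \chi_{\ell_i}$ and $\supp(c)$ is covered by these four lines. Second, for every line $m$ the dot-product lemma together with $v \cdot \one = 0$ gives $\sum_{P \in m} v(P) = v \cdot \chi_m = v \cdot \one = 0$. So to evaluate $v$ at a point I would choose a transversal line $m$ through it whose other intersections with the four lines are either known to be zero or are points where $v$ equals a value of $f$.

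For property \ref{Lm:Coord:Perm} I would invoke \Cref{Lm:Permutation} with the even $3$-cycle $\pi$ sending $(x,y,z) \mapsto (y,z,x)$: since $(0,1,z)$ and $(1,z,0)$ each have a zero coordinate and $\sgn(\pi) = 1$, the lemma yields $v(0,1,z) = v(1,z,0) = v(z,0,1)$, so all three equal $f(z)$. For property \ref{Lm:Coord:Inv} I would apply \Cref{Lm:Permutation} with the odd transposition swapping the last two coordinates, giving $v(0,1,z) = -v(0,z,1)$; for $z \neq 0$ the projective points $(0,z,1)$ and $(0,1,z^{-1})$ coincide, so $v(0,z,1) = f(z^{-1})$ and hence $f(z^{-1}) = -f(z)$. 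Property \ref{Lm:Coord:Roots} then follows from $f(0) = v(E_2) = 0$, from $f(-1) = v(0,1,-1) = v(Q_1) = 0$ (as $Q_1 = \ell_1 \cap \ell_4 = (0,1,-1)$), and from property \ref{Lm:Coord:Inv} with $z = 1$, which forces $f(1) = -f(1)$ and thus $f(1) = 0$ in odd characteristic.

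The substance lies in properties \ref{Lm:Coord:L4} and \ref{Lm:Coord:Plus}, where \Cref{Lm:Permutation} is unavailable because a generic point $R = (1,-z-1,z)$ of $\ell_4$ has no zero coordinate. For $z \notin \{0,-1\}$ I would use the transversal $\vspan{R, E_2}$, which meets $\supp(v)$ only in $E_2 = \ell_1 \cap \ell_3$, in $(1,0,z) \in \ell_2$, and in $R \in \ell_4$; since $v(E_2) = 0$, the line-sum relation collapses to $v(R) = -v(1,0,z)$, and using $(1,0,z) \sim (z^{-1},0,1)$ together with properties \ref{Lm:Coord:Perm} and \ref{Lm:Coord:Inv} this becomes $v(R) = -f(z^{-1}) = f(z)$, which is property \ref{Lm:Coord:L4}. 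The two degenerate arguments $z \in \{0,-1\}$, where $R$ equals $Q_3$ or $Q_2$, follow directly from $v(Q_2) = v(Q_3) = 0$ and $f(0) = f(-1) = 0$. For property \ref{Lm:Coord:Plus} I would instead take the transversal $\vspan{R, E_3}$, meeting $\supp(v)$ only in $E_3 = \ell_1 \cap \ell_2$, in $(1,-z-1,0) \in \ell_3$, and in $R$; since $v(E_3) = 0$, this gives $v(R) = -v(1,-z-1,0) = -f(-z-1)$, and comparing with property \ref{Lm:Coord:L4} yields $f(z) = -f(-z-1)$.

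The main obstacle I anticipate is choosing the correct corner for the transversals in \ref{Lm:Coord:L4}--\ref{Lm:Coord:Plus}: the three corners $E_1, E_2, E_3$ give genuinely different relations, and only $E_2$ and $E_3$ make the line-sum collapse to two terms one of which is a known value of $f$ (after projective rescaling and property \ref{Lm:Coord:Inv}), whereas $E_1$ produces an $f$ evaluated at an awkward argument. One must also keep careful track of the degenerate cases $z \in \{0,-1\}$, where $R$ coincides with one of the $Q_i$ and the transversal through $E_2$ degenerates, handling these by the already-established vanishing of $v$ at $Q_1, Q_2, Q_3$ and of $f$ at $0, \pm 1$.
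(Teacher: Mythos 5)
Your proof is correct and follows essentially the same route as the paper: define $f(z) = v(0,1,z)$, use \Cref{Lm:Permutation} for properties (\ref{Lm:Coord:Perm})--(\ref{Lm:Coord:Roots}), and evaluate $v$ on $\ell_4$ via transversals through the corners $E_2$ and $E_3$ for properties (\ref{Lm:Coord:L4})--(\ref{Lm:Coord:Plus}), with the degenerate cases $z \in \{0,-1\}$ handled via the vanishing of $v$ at the points $Q_i$. In fact your choice of $E_3$ for property (\ref{Lm:Coord:Plus}) is the right one: the paper's text writes $\vspan{P_z,E_1}$ there, which appears to be a typo, since that line meets $\ell_3$ only at $E_1$ and does not contain $(1,-z-1,0)$, whereas $\vspan{P_z,E_3}$ yields exactly the claimed relation $0 = v(P_z) + v(1,-z-1,0)$.
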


\begin{proof}
 For each $z \in \FF_p$, define $f(z) = v(0,1,z)$.
 
 (\ref{Lm:Coord:Perm}) This follows from the definition of $f$ and \Cref{Lm:Permutation}.

 (\ref{Lm:Coord:Inv}) Suppose that $z \in \FF_p^*$.
 Then $(0,1,z^{-1})$ and $(0,z,1)$ represent the same point of $\pg(2,p)$.
 By applying \Cref{Lm:Permutation}, we see that
 \[
  f(z^{-1}) = v(0,1,z^{-1}) = v(0,z,1) = -v(0,1,z) = -f(z).
 \]

 (\ref{Lm:Coord:Roots}) By definition $f(0) = v(E_2) = 0$.
 For $z = \pm 1$, $z = z^{-1}$.
 By (\ref{Lm:Coord:Inv}), $f(z) = -f(z)$, which implies that $f(z) = 0$.

 (\ref{Lm:Coord:L4}, \ref{Lm:Coord:Plus})
 Consider the point $P_z = (1,-z-1,z)$ on the line $\ell_4$.
 First suppose that $z \notin \{0,-1\}$, so that $P_z$ is not one of the points $Q_i$.
 Consider the line $\ell = \vspan{P_z,E_2}$.
 Then $0 = v \cdot \chi_\ell = v(P_z) + v(1,0,z)$, hence $v(P_z) = -v(1,0,z)$.
 By \Cref{Lm:Permutation}, $v(P_z) = v(z,0,1)$.
 By (\ref{Lm:Coord:Perm}), $v(P_z) = f(z)$.
 This proves (\ref{Lm:Coord:L4}).
 
 Now let $\ell$ be the line $\vspan{P_z,E_1}$.
 Then $0 = v \cdot \chi_\ell = v(P_z) + v(1,-z-1,0)$.
 By (\ref{Lm:Coord:Perm}), $v(P_z) = -f(-z-1)$.
 Therefore, $f(z) = v(P_z) = -f(-z-1)$.
 This proves (\ref{Lm:Coord:Plus}).

 Finally, suppose that $z \in \{0,-1\}$.
 Then $P_z$ is one the points $Q_i$, hence $v(P_z) = 0$ and $f(z) = 0$ by (\ref{Lm:Coord:Roots}), proving (\ref{Lm:Coord:L4}).
 In addition, (\ref{Lm:Coord:Plus}) follows directly from (\ref{Lm:Coord:Roots}).
\end{proof}

\begin{lm}
 \label{Lm:Zero}
 The function $f$ from \Cref{Lm:Coord} must be zero everywhere.
\end{lm}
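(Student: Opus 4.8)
The plan is to show that the codeword $v$ must vanish identically, which by \Cref{Lm:Coord} is equivalent to $f \equiv 0$. The properties collected in \Cref{Lm:Coord} only exploit lines through the triple points $E_1,E_2,E_3$ and the points $Q_i$; the one missing ingredient is the relation coming from a \emph{transversal}, i.e.\ a line meeting $\ell_1,\dots,\ell_4$ in four distinct points. Since $v \in \mc(2,p)$ and $v \cdot \one = 0$, the dot product lemma gives $v \cdot \chi_\ell = 0$ for every line $\ell$. Because $\supp(v)$ is contained in $\ell_1 \cup \dots \cup \ell_4$ and $v$ vanishes at all six intersection points, for a transversal $\ell = [a,b,c]$ this reads
\[
 f\!\left(-\tfrac bc\right) + f\!\left(-\tfrac ca\right) + f\!\left(-\tfrac ab\right) + f\!\left(\tfrac{b-a}{c-b}\right) = 0,
\]
after identifying each intersection point with its $f$-value via \Cref{Lm:Coord}~(\ref{Lm:Coord:Perm}) and (\ref{Lm:Coord:L4}). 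This is the one extra family of relations I would add to those of \Cref{Lm:Coord}.

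Next I would record the \emph{orbit structure} forced by \Cref{Lm:Coord}~(\ref{Lm:Coord:Inv}) and (\ref{Lm:Coord:Plus}). The maps $z \mapsto z^{-1}$ and $z \mapsto -z-1$ generate a copy of $\Sym(3)$ acting on $\FF_p \cup \{\infty\}$, and $f$ is anti-invariant under it, picking up the sign of the corresponding permutation. Hence $\supp(f)$ is a union of orbits, and on the two exceptional orbits $\{0,-1,\infty\}$ and $\{1,-2,-\tfrac12\}$, whose points are fixed by a transposition, $f$ must vanish, recovering \Cref{Lm:Coord}~(\ref{Lm:Coord:Roots}). Every other orbit has length $6$, except possibly one orbit of length $2$ consisting of the primitive cube roots of unity when $p \equiv 1 \pmod 3$. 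Since the four lines contribute disjointly (the intersection points carry value $0$), we have $\wt(v) = 4\,|\supp(f)|$, so a single length-$2$ orbit would force $\wt(v) = 8$, contradicting \Cref{Res:MinWt}; thus if $f \not\equiv 0$ then $|\supp(f)| \geq 6$.

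The heart of the argument is then to eliminate the length-$6$ orbits. Assuming $f(s_0) \neq 0$ for some $s_0$, I would choose the transversal so that $-\tfrac bc = s_0$ while the three remaining arguments $-\tfrac ca$, $-\tfrac ab$ and $\tfrac{b-a}{c-b}$ all avoid $\supp(f)$; the transversal relation then collapses to $f(s_0) = 0$, a contradiction. The remaining free parameter of the line ranges over $\FF_p$, and each of the three conditions ``argument lies in $\supp(f)$'' is a M\"obius condition excluding at most $|\supp(f)|$ values, so at most $3\,|\supp(f)| + O(1)$ parameters are forbidden.

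The main obstacle is guaranteeing that a good transversal survives, i.e.\ that $3\,|\supp(f)| + O(1) < p$. This is exactly where the weight hypothesis enters: the bound on $\wt(c)$, together with $\wt(v) = 4\,|\supp(f)|$ and the explicit relation $v = c - \sum_i \alpha_i \chi_{\ell_i}$, must be leveraged to keep $|\supp(f)|$ below roughly $p/3$. I expect this counting to be the delicate step, and if the crude estimate is not quite sharp enough, the natural remedy is either to feed several transversals into a small linear system that isolates $f(s_0)$, or to pass to the polynomial viewpoint: $f = F_v(0,1,Z)$ has degree at most $p-1$, and the transversal identity, read as an identity of one-variable polynomials in the free slope for a fixed generic second slope, should pin $\deg f$ low enough that its forced roots on the exceptional orbits already give $f \equiv 0$. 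Degenerate transversals, through an intersection point or with two coinciding arguments, only reproduce the relations of \Cref{Lm:Coord} and can simply be discarded from the count.
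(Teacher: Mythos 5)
Your setup is sound and matches the paper's starting point: the transversal relation $f(-b/c)+f(-c/a)+f(-a/b)+f\bigl((a-b)/(b-c)\bigr)=0$ is exactly the identity the paper exploits, and the observations $\wt(v)=4\,|\supp(f)|$ and the $\Sym(3)$ anti-invariance are correct. The gap is in the heart of your argument: the counting step requires $3\,|\supp(f)|+O(1)<p$, i.e.\ $|\supp(f)|\lesssim p/3$, and nothing in the hypotheses delivers this. The weight bound $\wt(c)\leq 5p-36$ constrains $c$, not $v=c-\sum_i\alpha_i\chi_{\ell_i}$; since the $\alpha_i$ are forced by the linear system at the intersection points and need not vanish, $v$ can a priori be nonzero at essentially every non-intersection point of the four lines, so $|\supp(f)|$ can be as large as $p-3$. (Indeed $5p-36$ already exceeds the $4p-8$ non-intersection points for the relevant $p$, so the weight hypothesis places no restriction at all on $\supp(f)$ at this stage.) Consequently a transversal meeting $\supp(v)$ in exactly one point need not exist, and the collapse to $f(s_0)=0$ cannot be forced. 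Your two fallback suggestions are pointed in the right direction but are not carried out, and the polynomial one is problematic because the four lines are in general position here, so \Cref{Thm:ConcurrentCodeword} gives no degree control on $v$.

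What the paper does instead is choose a specific one-parameter family of transversals, $\ell=[1,a+1,a]$ with $a\notin\{0,\pm1\}$, for which the four arguments of $f$ in the transversal relation are $-1-a^{-1}$, $-a$, $-(a+1)^{-1}$, and $-a$. Applying parts (\ref{Lm:Coord:Inv}) and (\ref{Lm:Coord:Plus}) of \Cref{Lm:Coord}, all four terms reduce to $\pm f(\pm a)$ and the relation collapses to $2\bigl(f(a)+f(-a)\bigr)=0$ for every $a$; combined with $f(z)=-f(-z-1)$ this gives $f(z)=f(z+1)$, so $f$ is constant and hence zero by part (\ref{Lm:Coord:Roots}). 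No bound on $|\supp(f)|$ is ever needed. If you want to salvage your approach, replace the genericity/counting step with this (or a similar) algebraically degenerate family of transversals whose four intersection parameters fall into few $\Sym(3)$-orbits.
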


\begin{proof}
 Take $a \in \FF_p \setminus \{0,\pm 1\}$.
 Consider the line $\ell$ with equation $X + (a+1)Y + aZ = 0$.
 Since $a \notin \{0,\pm 1\}$, $\ell$ contains none of the points $E_i$ or $Q_i$, and therefore intersects each of the lines $\ell_1, \dots, \ell_4$ in a different point.
 We find that
 \[
  v \cdot \chi_\ell = v(0,1,-1-a^{-1}) + v(-a,0,1) + v(1,-(a+1)^{-1},0) + v(1,a-1,-a) = 0.
 \]
 \Cref{Lm:Coord} tells us that
 \begin{align*}
  0 &= f(-1-a^{-1}) + f(-a) + f(-(a+1)^{-1}) + f(-a) \\
  &= -f(a^{-1}) + f(-a) - f(-a-1) + f(-a) \\
  &= f(a) + f(-a) + f(a) + f(-a)
  = 2 (f(a) + f(-a)).
 \end{align*}
 It follows that $f(-a) = -f(a)$.
 This also holds for $a \in \{0, \pm 1\}$ by \Cref{Lm:Coord} (\ref{Lm:Coord:Roots}).
 Combining this with \Cref{Lm:Coord} (\ref{Lm:Coord:Plus}), this means that $f(z) = f(z+1)$ for all $z \in \FF_p$.
 It follows that $f$ is a constant function.
 By \Cref{Lm:Coord} (\ref{Lm:Coord:Roots}), $f$ must be zero everywhere.
\end{proof}

It follows that $v$ is zero everywhere.
Indeed, $\supp(v)$ is contained in the union of the lines $\ell_1, \dots, \ell_4$.
If $P$ is an intersection point of two of these lines (i.e.\ a point $E_i$ or $Q_i$), then we already know that $v(P) = 0$.
If $P$ lies on exactly one of the lines, then by \Cref{Lm:Coord} (\ref{Lm:Coord:Perm}) or (\ref{Lm:Coord:L4}) it follows that $v(P) = f(z)$ for some $z \in \FF_p$, hence $v(P) = 0$ by \Cref{Lm:Zero}.
We conclude that $c = \sum_i \alpha_i \chi_{\ell_i}$, hence $c$ is a linear combination of at most 4 lines.

\section{Odd codewords as linear combinations of lines}
 \label{Sec:LinComb}

Let $c$ be an odd codeword on some concurrent lines.
Since $c$ is a function from the points of $\pg(2,q)$ to $\FF_p$, we can also interpret it as a vector over $\FF_p$ whose positions are labelled by the points of $\pg(2,q)$.
Let $A$ denote the incidence matrix of $\pg(2,q)$.
This means that the rows and columns of $A$ are labelled by the lines and points respectively of $\pg(2,q)$, and that
\[
 A(\ell,P) = \begin{cases}
  1 & \text{if } P \in \ell, \\
  0 & \text{if } P \notin \ell.
 \end{cases}
\]
Writing $c$ as an explicit linear combination of lines is equivalent to finding a vector $v$ over $\FF_p$, whose positions are labelled by the lines of $\pg(2,q)$, with
\[
 v A = c.
\]
The reason why it is difficult to find such a $v$ is that over $\FF_p$, $A$ does not have full rank, and hence is not invertible.
However, we if we move to the rationals, then we find the well-known identity
\[
 A A^\top = q I + J,
\]
with $I$ the identity matrix, and $J$ the all-one matrix.
Since also $AJ = (q+1)J$,
\[
 A^{-1} = \frac1q\left( A^\top - \frac1 {q+1} J \right).
\]
This gives us a way finding a vector $v$ with $vA = c$ as follows.
\begin{itemize}
 \item[] {\bf Step 1.} Translate the codeword $c$ to a vector $c_\QQ$ over $\QQ$.
 This means the following.
 Interpret $\FF_p$ as $\ZZ / p \ZZ$.
 Then each element of $\FF_p$ is a coset of $p \ZZ$.
 For each point $P$, let $c_\QQ(P)$ be an integer in the coset $c(P)$ of $p \ZZ$.
 \item[] {\bf Step 2.} Calculate 
 \[
  v_\QQ = c_\QQ A^{-1} = \frac1q\left( c_\QQ A^\top - \frac1 {q+1} c_\QQ J \right).
 \]
 \item[] {\bf Step 3.} In case that every entry of $v_\QQ$ is a rational number whose denominator is not divisible by $p$, every entry corresponds to an element of $\FF_p$.
 We can translate $v_\QQ$ to a vector $v$ over $\FF_p$.
 This vector must satisfy $v A = c$.
 \item[] {\bf Step 4.} It also holds that $(v+v')A = c$ for each vector $v'$ in the left kernel of $A$.
 We can for instance take $v'$ to be the vector taking value 1 on the affine points, and value 0 on the points of $\ell_\infty$.
\end{itemize}

In the following subsections, using the approach described above, we explicitly construct odd codewords on $3$ and $4$ lines with maximum possible rank.

\subsection{An odd codeword on 3 lines}

 Let $p > 2$ be prime.
 Consider the odd codeword $c$ defined by
 \[
 c(P) = \begin{cases}
  z & \text{if } P =(1,0,-z) \text{ or } P = (0,1,z) \text{ or } P=(1,-1,z), \\
  0 & \text{otherwise,}
 \end{cases}
\]
on the lines $X=0$, $Y=0$, and $X+Y=0$.
Recall the function $\nu$ defined in \Cref{Not:Nu}.
Following Step 1, we can define $c_\QQ$ by
\[
 c_\QQ(P) = \begin{cases}
  - \nu(z) & \text{if } P = (1,0,z), \\
  \nu(z) & \text{if } P = (0,1,z) \text{ or } P = (1,-1,z), \\
  0 & \text{otherwise}.
 \end{cases}
\]
We calculate $c_\QQ A^\top$.
Take a line $\ell$ in $\pg(2,p)$.
First suppose that $\ell$ has an equation of the form $Z = aX + bY$.
Then $\ell$ intersects $\supp(c)$ in the points
\begin{align*}
 (1,0,a), && (0,1,b), && (1,-1,a-b).
\end{align*}
Therefore,
\[
 (c_\QQ A^\top)(\ell) = -\nu(a) + \nu(b) + \nu(a-b)
\]
Note that
\[
 \nu(a-b) = \begin{cases}
  \nu(a) - \nu(b) & \text{if } \nu(a) \geq \nu(b), \\
  \nu(a) - \nu(b) + p & \text{if } \nu(a) < \nu(b).
 \end{cases}
\]
Hence,
\[
 (c_\QQ A^\top)(\ell) = \begin{cases}
  p & \text{if } \nu(a) < \nu(b), \\
  0 & \text{if } \nu(a) \geq \nu(b).
 \end{cases}
\]
Now suppose that $\ell$ has an equation of the form $aX + bY = 0$.
Note that
\[
 \sum_{z \in \FF_p} \nu(z) = \sum_{i=0}^{p-1} i = \binom p 2.
\]
Thus, it is easy to check that
\[
 (c_\QQ A^\top)(\ell) = \begin{cases}
  - \binom p2 & \text{if } \vspan{(a,b)} = \vspan{(0,1)}, \\
  \binom p2 & \text{if } \vspan{(a,b)} = \vspan{(1,0)} \text{or } \vspan{(a,b)} = \vspan{(1,1)}, \\
  0 & \text{otherwise.}
 \end{cases}
\]
Moreover,
\[
 c_\QQ J = (2 - 1) \sum_{z \in \FF_p} \nu(z) = \binom p 2.
\]
If we define $v_\QQ = c_\QQ A^{-1}$, then
\[
 v_\QQ(\ell) = \begin{cases}
  1 - \frac{p-1}{2(p+1)} & \text{if $\ell = [a,b,-1]$ with } \nu(a) < \nu(b), \\
  - \frac{p-1}2 - \frac{p-1}{2(p+1)} & \text{if } \ell = [0,1,0], \\
  \frac{p-1}2 - \frac{p-1}{2(p+1)} & \text{if } \ell = [1,0,0] \text{ or } \ell = [1,1,0], \\
  - \frac{p-1}{2(p+1)} & \text{otherwise}. \\
 \end{cases}
\]
Now we translate this to a vector $v$ over $\FF_p$.
As noted in Step 4, we are allowed to subtract a constant from the entries corresponding to the affine points.
This gives us the vector $v$ with $v A = c$ defined by
\[
 v(\ell) = \begin{cases}
  1 & \text{if $\ell = [a,b,-1]$ with } \nu(a) < \nu(b), \\
  0 & \text{if $\ell = [a,b,-1]$ with } \nu(a) \geq \nu(b), \\
  1 & \text{if } \ell = [0,1,0], \\
  0 & \text{if } \ell = [1,0,0] \text{ or } \ell = [1,1,0], \\
  \frac12 & \text{otherwise}. \\
 \end{cases} 
\]

\begin{rmk}
 Essentially the same vector $v$ was also found in an unpublished manuscript of De Boeck and Vandendriessche \cite{DeBoeckVandendriessche:Unp}.
 Their proof relies on a much more technical calculation.
\end{rmk}

We can now give a different proof for the fact that the set of points in \Cref{Res:KissSomlai} has 3 special directions.
We will follow the notation of \Cref{Res:KissSomlai} and simply write $a < b$ instead of $\nu(a) < \nu(b)$.

\begin{thm}
 \label{Thm:KissSomlaiAgain}
 Let $p > 2$ be prime. Then the set
 \[
  S = \sett{(x,y) \in \FF_p^2}{ y < x }
 \]
 has exactly 3 special directions.
\end{thm}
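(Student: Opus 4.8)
The plan is to feed the explicit linear combination $c = \sum_\ell v(\ell)\chi_\ell$ just obtained into \Cref{Prop:CodeToSet} and to recover $S$ (up to an affine map) as the associated multiset. Concretely, \Cref{Prop:CodeToSet} attaches to this combination the multiset $M$ in which the affine point $(a,b)$ carries multiplicity $\nu(\alpha_{[a,b,1]})$ with $\alpha_{[a,b,1]} = v([a,b,1])$. Since $[a,b,1]$ and $[-a,-b,-1]$ denote the same line, the table defining $v$ gives $v([a,b,1]) = 1$ when $\nu(-a) < \nu(-b)$ and $v([a,b,1]) = 0$ otherwise; as $\nu(0)=0$ and $\nu(1)=1$, the multiset $M$ is in fact the \emph{set} $\set{(a,b) \in \FF_p^2 : \nu(-a) < \nu(-b)}$, and the linear map $(a,b) \mapsto (-b,-a)$ carries $M$ onto $S$. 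The first step is to record this identification and to note that an affine transformation preserves both the set of mod-special directions and the set of special directions of a point set.

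Next I would invoke \Cref{Prop:CodeToSet}: since $c$ is an odd codeword on the three concurrent lines $X=0$, $Y=0$, $X+Y=0$, with slopes $(\infty),(0),(-1)$, the set of mod-special directions of $M$ is exactly this triple. Transporting along the affine equivalence $M \to S$, I conclude that $S$ has \emph{exactly three mod-special directions}. This is the entire payoff of the coding-theoretic detour: it hands us the modular count for free, bypassing the delicate direct verification that every line in the remaining directions meets $S$ in $\frac{p-1}2$ points.

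The remaining task, and the main obstacle, is to upgrade ``mod-special'' to ``special'', i.e.\ to show that for this $S$ the two notions coincide. One inclusion is automatic: $|S| = \binom p2 = \frac{p-1}2\,p$ is divisible by $p$, so an equidistributed direction is in particular mod-equidistributed, whence every mod-special direction is special and $S$ has at least three special directions. For the reverse inclusion I would show that a mod-equidistributed direction of $S$ is genuinely equidistributed. Fixing such a direction with common intersection size $r$ modulo $p$, $0 \leq r < p$, and using that each of its lines has only $p$ affine points, the intersection has size exactly $r$ when $r>0$, and size $0$ or $p$ when $r=0$; summing the $p$ parallel lines against $|S| = \frac{p-1}2 p$ forces $r = \frac{p-1}2$ in the first case (equidistribution), while $r=0$ would require $\frac{p-1}2$ lines to lie entirely inside $S$. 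The crux is therefore to exclude full lines, which I would do by exhibiting a point off $S$ on every line: each vertical line and each line of slope $\neq 1$ meets the diagonal $\set{(t,t)}$, none of whose points lie in $S$, while a line $Y = X + b$ of slope $1$ meets $S$ in exactly $\nu(b) \leq p-1 < p$ points. With full lines ruled out, mod-special and special directions coincide, and $S$ has exactly three special directions.
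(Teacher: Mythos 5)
Your proof is correct and follows essentially the same route as the paper: both feed the explicit vector $v$ into \Cref{Prop:CodeToSet}, identify the resulting set with $S$ via the affine map $(a,b)\mapsto(-b,-a)$, and then upgrade the modular statement to an exact one. The only divergence is in that last step. The paper reads the common residue of the mod-equidistributed parallel classes directly off part (2) of \Cref{Prop:CodeToSet} as $\nu(-\alpha_{[1,\overline d,0]})=\nu(-1/2)=\frac{p-1}{2}\neq 0$, so the bound $0\le|S'\cap\ell|\le p$ immediately forces every such line to meet $S'$ in exactly $\frac{p-1}{2}$ points and no full-line analysis is needed. You instead recover the residue by summing a parallel class against $|S|=\binom p2$ and must separately exclude the residue-$0$ case, i.e.\ lines fully contained in $S$, which you do correctly via the diagonal and the slope-$1$ count; this is slightly longer but equally valid and uses only part (1) of the proposition. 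One harmless slip: the three mod-special directions are $(0),(\infty),(1)$ rather than $(0),(\infty),(-1)$ --- in \Cref{Prop:CodeToSet} the mod-special direction $(d)$ corresponds to the covering line $X+dY=0$, consistently with \Cref{Prop:SetToCode} and with the proof of \Cref{Prop:CodeToSet}, not to the covering line of slope $d$ --- but since the theorem only asserts the number of special directions, this does not affect your conclusion.
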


\begin{proof}
 Consider the odd codeword $c$ defined at the beginning of this subsection.
 We have found the vector $v$ which writes $c$ as a linear combination of lines.
 Consider the set
 \[
  S' = \sett{(a,b) \in \FF_q^2}{-a < -b}.
 \]
 By \Cref{Prop:CodeToSet}, $S'$ has exactly 3 mod-special directions, and all lines with a mod-equidistributed direction meet $S'$ in $\nu(-1/2) = \frac{p-1}2$ points modulo $p$.
 Since $S'$ is an actual set, each lines meets $S'$ in at least 0 and at most $p$ points, so all lines with a mod-equidistributed direction meet $S'$ in exactly $\frac{p-1}2$ points.
 Therefore, all mod-equidistributed directions are actually equidistributed, and $S'$ has exactly 3 special directions.
 Since $S'$ is the image of the set $S$ from the theorem after the affine transformation $(X,Y) \mapsto (-Y,-X)$, $S$ also has exactly 3 special directions.
\end{proof}

\subsection{An odd codeword on 4 lines}
 \label{Sec:4Lines}

Let $p > 3$ again be prime.
Consider the odd codeword $c$ from \Cref{Ex:OddOn4} given 
 \[
  c(P) = \begin{cases}
   2 z^2 & \text{if $P = (1,0,z)$ or $P=(0,1,z)$}, \\
   - z^2 & \text{if $P = (1,1,z)$ or $P = (1,-1,z)$}, \\
   0 & \text{otherwise}.
  \end{cases}
 \]
on the 4 lines $X=0$, $Y=0$, $X=Y$, and $X=-Y$.
Following Step 1, we can choose $c_\QQ$ to be
\[
 c_\QQ(P) = \begin{cases}
  2 \nu(z)^2 & \text{if } P = (1,0,z) \text{ or } P = (0,1,z), \\
  - \nu(z)^2 & \text{if } P = (1,1,z) \text{ or } P = (1,-1,z), \\
  0 & \text{otherwise.}
 \end{cases}
\]
Now we calculate $c_\QQ A^\top$.
First take a line $\ell$ with equation $Z = aX + bY$.
Then $\ell$ can only intersect $\supp(c)$ in the points
\begin{align*}
 (1,0,a), && (0,1,b), && (1,1,a+b), && (1,-1,a-b).
\end{align*}
Therefore,
\begin{align*}
 (c_\QQ A^\top)(\ell)
 =& 2 \nu(a)^2 + 2 \nu(b)^2 - \nu(a+b)^2 - \nu(a-b)^2 \\
 =& 2 \nu(a)^2 + 2\nu(b)^2 
  - \begin{cases}
   (\nu(a) + \nu(b))^2 & \text{if } \nu(a) + \nu(b) < p \\
   (\nu(a) + \nu(b) - p)^2 & \text{if } \nu(a) + \nu(b) \geq p
  \end{cases} \\
  &- \begin{cases}
   (\nu(a) - \nu(b))^2 & \text{if } \nu(a) \geq \nu(b) \\
   (\nu(a) - \nu(b) + p)^2 & \text{if } \nu(a) < \nu(b)
  \end{cases} \\
  % =&-\begin{cases}
  %  0 & \text{if } \nu(a) + \nu(b) < p \\
  %  p^2 - 2(\nu(a) + \nu(b))p & \text{if } \nu(a) + \nu(b) \geq p
  % \end{cases} \\
  % &- \begin{cases}
  %  0 & \text{if } \nu(a) \geq \nu(b) \\
  %  p^2 - 2(\nu(a) - \nu(b))p & \text{if } \nu(a) < \nu(b)
  % \end{cases} \\
  =& \begin{cases}
   0 & \text{if $\nu(a) + \nu(b) < p$ and $\nu(a) \geq \nu(b)$}, \\
   p( 2\nu(a) + 2\nu(b) - p) & \text{if $\nu(a) + \nu(b) \geq p$ and $\nu(a) \geq \nu(b)$}, \\
   p(2\nu(a) - 2\nu(b) - p) & \text{if $\nu(a) + \nu(b) < p$ and $\nu(a) < \nu(b)$}, \\
   p(4\nu(a)-2p) & \text{if $\nu(a) + \nu(b) \geq p$ and $\nu(a) < \nu(b)$}, \\
  \end{cases}
\end{align*}

We remark that
\[
 \sum_{z \in \FF_p} \nu(z)^2 = \sum_{i=1}^{p-1} i^2 = \frac{(p-1)p(2p-1)}{6}.
\]
It is then straightforward to finish the calculation of the vector $v$, similar to the previous example.
We find
\[
 v(\ell) = \begin{cases}
  0 & \text{if $\ell = [a,b,-1]$ with $\nu(a) + \nu(b) < p$ and $\nu(a) \geq \nu(b)$}, \\
  2(a+b) & \text{if $\ell = [a,b,-1]$ with $\nu(a) + \nu(b) \geq p$ and $\nu(a) \geq \nu(b)$}, \\
  2(a-b) & \text{if $\ell = [a,b,-1]$ with $\nu(a) + \nu(b) < p$ and $\nu(a) < \nu(b)$}, \\
  4a & \text{if $\ell = [a,b,-1]$ with $\nu(a) + \nu(b) \geq p$ and $\nu(a) < \nu(b)$}, \\
  0 & \text{if } \ell = [1,0,0] \text{ or } \ell = [0,1,0], \\
  - \frac 12 & \text{if } \ell = [1,1,0] \text{ or } \ell = [1,-1,0], \\
  - \frac 13 & \text{otherwise}.
 \end{cases}
\]
Note that the condition $p > 3$ is necessary for the above vector to make sense as taking values in $\FF_p$, since we have fractions with $2$ and $3$ in the denominator.

The vector $v$ yields a multiset of points in $\ag(2,p)$ with exactly 4 mod-special directions.
By doing some manipulations which do not alter the mod-special directions, we find the following multiset.

\begin{prop}
 Let $p > 3$ be prime.
 Consider the multiset $M$ of points in $\ag(2,p)$ where a point $(x,y)$ has multiplicity
 \[
  \begin{cases}
  \nu(-x) & \text{if $\nu(x) + \nu(y) < p$ and $x \geq y$,} \\
  \nu(y) & \text{if $\nu(x) + \nu(y) \geq p$ and $x \geq y$,} \\
  \nu(-y) & \text{if $\nu(x) + \nu(y) < p$ and $x < y$,} \\
  \nu(x) & \text{if $\nu(x) + \nu(y) \geq p$ and $x < y$.}
  \end{cases}
 \]
 Then $M$ has exactly $4$ mod-special directions, namely $(0)$, $(\infty)$, $(1)$, and $(-1)$.
 Moreover, $M$ is not the union of multisets with at most 3 mod-special directions.
 %\zs{; and it is not the union of multisets with less than $4$ mod-special directions.}
\end{prop}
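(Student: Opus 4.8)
The plan is to deduce both statements from the fact that $M$ is, by its very construction, the multiset attached by \Cref{Prop:CodeToSet} to the explicit linear combination $v$ of the odd codeword $c$ on four lines from \Cref{Ex:OddOn4}. Recall that $c$ is supported on the four lines $X=0$, $Y=0$, $X=Y$, $X=-Y$ through $P=(0,0,1)$, and that it was built from $G=2Z^2$, so that by \Cref{Rmk:Rank}~(\ref{Rmk:Rank:Concurrent}) its $P$-rank is maximal, $\rk_P(c)=2$. Everything will follow from transporting this single rank statement across the multiset--codeword dictionary of \Cref{Sec:SetsToCode}.

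For the list of mod-special directions I would apply \Cref{Prop:CodeToSet} directly: the raw multiset with multiplicity $\nu(\alpha_{[a,b,1]})$ at $(a,b)$ has as its set of mod-special directions exactly the four directions $\{(0),(\infty),(1),(-1)\}$ carried by the supporting lines of $c$ (the set $\{0,\infty,1,-1\}$ is stable under $d\mapsto -1/d$, which reconciles the line-labelling conventions of \Cref{Prop:SetToCode,Prop:CodeToSet}). The displayed multiset $M$ is obtained from this raw multiset by the multiplicity adjustments described before the proposition, which amount to adding or deleting full lines. Since a full line meets every line of a different slope in exactly one point and its own parallel class in $0$ points modulo $p$, such an operation shifts all intersection numbers in the other directions by a constant and leaves them unchanged modulo $p$ in its own direction; hence it preserves the set of mod-special directions. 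Alternatively, one may verify directly that $\pr_{M,0}$, $\pr_{M,\infty}$, $\pr_{M,1}$, $\pr_{M,-1}$ are non-constant while all other projection functions are constant, by summing the displayed multiplicities along the relevant lines.

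For the non-decomposability I would argue by contradiction. First I would record that the codeword $c_M$ attached to $M$ by \Cref{Prop:SetToCode} coincides with $c$ up to a linear combination of lines through $P$: the affine coefficients agree modulo $p$ by construction, all remaining discrepancies lie on the lines $[1,\overline d,0]$ and $[0,1,0]$, and the full-line adjustments above only alter $c_M$ by further such lines; as each line through $P$ has $P$-rank $0$, subadditivity (\Cref{Rmk:Rank}~(\ref{Rmk:Rank:Sum})) yields $\rk_P(c_M)=\rk_P(c)=2$. Now suppose $M=\bigcup_i M_i$ with each $M_i$ possessing at most $3$ mod-special directions. As multiplicities add, $c_M$ equals $\sum_i c_{M_i}$ plus a linear combination of the correction lines $[1,\overline d,0]$, all of which pass through $P$. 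By \Cref{Prop:SetToCode} each $c_{M_i}$ is supported on at most $3$ concurrent lines through $P$, and so is each single line through $P$; therefore $c_M$ would be a linear combination of codewords whose supports are covered by at most $3=1+2$ lines through $P$. \Cref{Prop:ReducibleOddCodeword} then forces $\rk_P(c_M)\le 1$, contradicting $\rk_P(c_M)=2$. Hence $M$ is not such a union, which exhibits $M$ as a concrete instance of \Cref{Crl:MultisetNotUnion} for $n=4$.

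The conceptual core—that maximal $P$-rank obstructs decomposition into codewords on fewer concurrent lines—is immediate from \Cref{Prop:ReducibleOddCodeword}, so I expect the only genuine work to be bookkeeping. Concretely, the delicate point is confirming that the passage from the raw multiset of \Cref{Prop:CodeToSet} to the clean formula in the statement, together with the change of normalisation between the line labels $[a,b,-1]$ used for $v$ and the labels $[a,b,1]$ indexing affine multiplicities (where the relation $\nu(-x)=p-\nu(x)$ enters), alters $c_M$ only by lines through $P$. Once this is in place, neither the set of mod-special directions nor the $P$-rank is disturbed, and both assertions drop out.
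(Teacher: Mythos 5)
Your proposal is correct and follows essentially the same route as the paper: the proposition is obtained by feeding the explicit linear combination $v$ of the odd codeword from \Cref{Ex:OddOn4} into \Cref{Prop:CodeToSet} (giving the four mod-special directions), and the non-decomposability is exactly the rank obstruction of \Cref{Prop:ReducibleOddCodeword}/\Cref{Crl:MultisetNotUnion} applied with $\rk_P(c)=2$. The bookkeeping you flag (that passing from the raw coefficients of $v$ to the displayed multiplicity formula only adds full lines and rescales, hence disturbs neither the mod-special directions nor the $P$-rank) is likewise left implicit in the paper's ``some manipulations which do not alter the mod-special directions.''
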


\section{Conclusion and open problems}
 \label{Sec:Conclusion}

In this paper, we drew a connection between multisets of points in $\ag(2,q)$ with $k$ mod-special directions and odd codewords of $\mc(2,q)$ on $k$ concurrent lines.
The astute reader will have noticed that the proofs in \Cref{Sec:Concurrent} are mostly the same as the proofs in the beginning of \Cref{Sec:ProofsOfTheorems}.
Using the link between multisets and codewords outlined in \Cref{Sec:SetsToCode}, one can translate proofs of one of these sections into to other.
As a consequence of the link between the two topics, \Cref{Thm:ConcurrentCodeword} can be translated into a necessary and sufficient condition for a polynomial $F(X,Y,Z)$ to be of the form $F_M$ for some multiset $M$, where $F_M$ denotes polynomial associated to $M$ as defined in \Cref{Prop:AddRedPol}.
This may serve as an extra motivation why the study of special and mod-special directions would benefit from its link with the study of odd codewords.

\bigskip

We end this paper by discussing some interesting open problems.
\begin{enumerate}
 \item \Cref{Sec:LinComb} outlines a procedure to construct multisets of points in $\ag(2,p)$ with few mod-special directions.
 We saw in \Cref{Sec:4Lines} how to construct a multiset with 4 mod-special directions.
 This set divides the plane into four parts, and in each of these parts assigns multiplicities to the points according to a linear function.
 If we would extrapolate this procedure starting from an odd codeword on $k$ lines, which is not a linear combination of odd codewords on fewer than $k$ lines with the same point of concurrency, we would expect to find multisets of points where the multiplicities are functions of degree $k-2$.
 It is therefore still a very hard problem to construct sets of points with exactly $k$ special directions.
 One might even wonder whether this is even possible.
 Does it hold that for each $k \geq 4$ and all sufficiently large primes $p$ (where the lower bound on $p$ depends on $k$), $\ag(2,p)$ has no sets of points with exactly $k$ special directions?
 \item In \Cref{Thm:MinSpecialDir}, we gave a lower bound on the minimum number of special directions of a set of size $nq$ in $\ag(2,q)$, with $q = p^h$, $h > 1$.
 We saw that this lower bound is tight if $h = 2$ and $n=1$.
 Are there other examples where this bound is tight?
 Can it be improved?
 \item We saw in \Cref{Res:SzWNonPrime} that if $q$ is large enough and not prime, then all codewords of $\mc(2,q)$ up to weight approximately $\sqrt q q$, or $\frac{\sqrt q}2 q$ if $q$ is the square of a prime, are linear combinations of few lines.
 In case $q$ is prime this no longer holds, as illustrated by the odd codewords on concurrent lines.
 Thus, we can make small weight codewords of $\mc(2,p)$ by choosing a small number $n$ of lines, and making a linear combination of the lines and odd codewords on concurrent subsets of these lines.
 Up to which weight are all codewords of this form?
 We saw that this is the case up to weight at least $5p-36$ for $p \geq 53$.
 However, our proof is too ad hoc to extend to higher weights.
 When classifying codewords whose support is contained in the union of 5 lines $\ell_1, \dots, \ell_5$, more complicated configurations arise.
 For example, if $\ell_1, \ell_2, \ell_3$ are concurrent and $\ell_3,\ell_4,\ell_5$ are concurrent at a different point, we can make a linear combination of and odd codeword on $\ell_1,\ell_2,\ell_3$ and an odd codeword on $\ell_3,\ell_4,\ell_5$.
 A more refined strategy will be necessary to deal with these kinds of configurations.
\end{enumerate}

\paragraph{Acknowledgements.} The first author is grateful to the others authors for their hospitality during his visit to Budapest.
The first author was partially supported by Fonds Wetenschappelijk Onderzoek project 12A3Y25N and by a Fellowship of the Belgian American Educational Foundation. The second author was partially supported by the Slovenian Research Agency,
research project J1-9110.

\newcommand{\etalchar}[1]{$^{#1}$}

\end{document}